\documentclass[11pt]{article}
\usepackage{amsfonts,amsmath,amssymb,amsthm, color, enumerate,graphicx}
\usepackage{dsfont}

\usepackage[numbers]{natbib}
\setlength{\bibsep}{0.0pt}

\def\fullpage {
\addtolength{\topmargin}{-2 cm}
\addtolength{\oddsidemargin}{-1.6 cm} \addtolength{\textwidth}{+3.0 cm}
\addtolength{\textheight}{+3.3 cm}}
\fullpage
\setlength{\parindent}{0pt}
\parskip=6pt

\newcommand{\sfrac}[2]{\scriptscriptstyle{\frac{#1}{#2}}}

\newtheorem{theorem}{Theorem}
\newtheorem{lemma}[theorem]{Lemma}

\newtheorem{corollary}[theorem]{Corollary}

\newtheorem{question}{Question}
\newtheorem{problem}[question]{Problem}
\newtheorem{proposition}[theorem]{Proposition}
\newtheorem*{definition}{Definition}

\begin{document}

\title{Full subgraphs}

\author{
{\large{Victor Falgas-Ravry}}\thanks{Institutionen f\"or matematik och matematisk statistik, Ume{\aa}  Universitet, 901 87 Ume{\aa}, Sweden. \footnotesize{Email: {\tt victor.falgas-ravry@umu.se}. Research supported by fellowships from the Kempe foundation and the Mittag-Leffler Institute.}}
\and
{\large{Klas Markstr\"{o}m}}\thanks{Institutionen f\"or matematik och matematisk statistik, Ume{\aa}  Universitet, 901 87 Ume{\aa}, Sweden. Research supported by a grant from Vetenskapsr{\aa}det.
\footnotesize {E-mail: {\tt klas.markstrom@umu.se}.}}
\and{\large{Jacques Verstra\"ete}}\thanks{Department of Mathematics, University of California at San Diego, 9500
Gilman Drive, La Jolla, California 92093-0112, USA. \footnotesize{ E-mail: {\tt jverstra@math.ucsd.edu.} Research supported by NSF Grant DMS-110 1489. }}}

\date{}
\maketitle

%\vspace{-0.4in}

%-----------------------------------------------------------------------------------------------------------------------------------------------------------------------------------------------------------------------
\begin{abstract}
Let $G=(V,E)$ be a graph of density $p$ on $n$ vertices. Following Erd\H{o}s, \L uczak and Spencer, an $m$-vertex subgraph $H$ of $G$ is called {\em full} if
%the subgraph of $G$ induced by $H$
$H$ has minimum degree at least $p(m - 1)$. Let $f(G)$ denote the order of a largest full subgraph of $G$.
If $p\binom{n}{2}$ is a non-negative integer, define
\[ f(n,p) = \min\{f(G) : \vert V(G)\vert = n, \ \vert E(G)\vert = p\textstyle{{n \choose 2}}\}.\]
Erd\H{o}s, \L uczak and Spencer proved that for $n \geq
2$,
\[ (2n)^{\frac{1}{2}} - 2 \leq f(n, {\sfrac{1}{2}}) \leq 4n^{\sfrac{2}{3}}(\log n)^{\sfrac{1}{3}}.\]
In this paper, we prove the following lower bound: for $n^{-\sfrac{2}{3}} <p_n <1-n^{-\sfrac{1}{7}}$,
\[ f(n,p) \geq \frac{1}{4}(1-p)^{\sfrac{2}{3}}n^{\sfrac{2}{3}} -
1.\] Furthermore we show that this is tight up to a multiplicative constant factor for infinitely many $p$ near
the elements of
$\{\frac{1}{2},\frac{2}{3},\frac{3}{4},\dots\}$.
%As an ingredient of the proof, we show that every graph $G$ on $n$ vertices has a subgraph $H$
%on $m$ vertices for an $m$ satisfying $\lfloor \frac{n}{r} \rfloor\leq m \leq \lceil \frac{n}{r} \rceil + 1$ such that for
%every $v \in V(H)$,
%$d_H(v) \geq \frac{1}{r}d_G(v)$
%the degree of $v$ in $H$ is at least $\frac{1}{r}$
%times its degree in $G$. We also consider a Ramsey-type
In contrast, we show that for any $n$-vertex graph $G$,
either $G$ or $G^c$ contains a full subgraph on
$\Omega(\frac{n}{\log n})$ vertices.
%Our proof furthermore gives a generalization of an old result of Erd{\H o}s and Pach on quasi-Ramsey numbers.
Finally, we discuss full subgraphs of random and pseudo-random graphs, and several open problems.
\end{abstract}

\section{Introduction}
\subsection{Full subgraphs}
A {\em full subgraph} of a graph $G$ of density $p$ is an $m$-vertex subgraph $H$ of minimum degree at least $p(m - 1)$. This notion was introduced by Erd\H{o}s, \L uczak and Spencer~\cite{ELS}. We may think of a full subgraph as a particularly `rich' subgraph, with `unusually high' minimum degree: if we select $m$ vertices of $G$ uniformly at random, then the expected \emph{average} degree of the subgraph they induce is exactly $p(m-1)$, which is the minimum degree we require for the subgraph to be full. One cannot in general expect to find $m$-vertex subgraphs of higher minimum degree, as may be seen for example by considering complete multipartite graphs with parts of equal sizes. Fixing $p$ and $n$, one may ask for the largest $m$ such that every $n$-vertex graph $G$ has a full subgraph with $m$ vertices.
For a graph $G$, let $f(G)$ denote the largest number of vertices in a full subgraph of $G$. If $p{n \choose 2}$ is a non-negative integer, define
\[ f(n,p) = \min\{f(G) : \vert V(G)\vert = n, \vert E(G)\vert = p\textstyle{{n \choose 2}}\}.\]
%This definition is motivated by the fact that if we select $m$ vertices of $G$
%uniformly at random, the expected average degree of the subgraph they induce is $p(m-1)$.
% %%in the random graph $G_{n,p}$, then $p(m - 1)$ is exactly the expected number of selected neighbours of any vertex.
Erd\H{o}s, \L uczak and Spencer~\cite{ELS} raised a
problem  which, in our terminology, amounts to
determining $f(n,p)$ when $p = \frac{1}{2}$, and showed
$(2n)^{\frac{1}{2}} - 2 \leq f(n, {\sfrac{1}{2}}) \leq (2
+ \tfrac{2}{\sqrt{3}}) n^{\sfrac{2}{3}}(\log
n)^{\sfrac{1}{3}}$.
%Erd\H{o}s, \L uczak and Spencer~\cite{ELS} raised the problem of determining $f(n,p)$ when $p = \frac{1}{2}$,
%and showed $(2n)^{\frac{1}{2}} - 2 \leq f_{\sfrac{1}{2}}(n) \leq (2 + \tfrac{2}{\sqrt{3}}) n^{\sfrac{2}{3}}(\log n)^{\sfrac{1}{3}}$.
In this paper, we prove the following theorem for general $p$, improving the lower bound of~\cite{ELS}:
\begin{theorem} \label{main}
For all $p = p_n$ such that $n^{-\sfrac{2}{3}} <p_n <1-n^{-\sfrac{1}{7}}$,
\[ f(n,p) \geq  \frac{1}{4}(1-p)^{\sfrac{2}{3}}n^{\sfrac{2}{3}}-1.\]
Moreover for each $c \geq 1$, if $p = \frac{r}{r + 1} +
cn^{-\sfrac{2}{3}}$ for some $r \in \mathbb N$, then
$f(n,p) = \Theta(n^{\sfrac{2}{3}})$.
\end{theorem}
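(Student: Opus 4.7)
\emph{Lower bound.} For the universal bound I would proceed by greedy iterative minimum-degree deletion: starting from $G_0=G$, while $G_i$ is not a full subgraph, delete a vertex $v_{i+1}$ of minimum degree in $G_i$ to produce $G_{i+1}$. The process halts at some full subgraph $G_t$ of size $n_t$, and the target is $n_t\ge \tfrac14(1-p)^{2/3}n^{2/3}-1$. Setting $d_i=\deg_{G_i}(v_{i+1})$ and $g_i=p(n_i-1)-d_i>0$, a telescoping computation yields $|E(G_t)|=p\binom{n_t}{2}+\sum_{i<t}g_i$; combined with $|E(G_t)|\le \binom{n_t}{2}$ this gives the edge-slack bound
\[
\sum_{i<t}g_i \;\le\; (1-p)\binom{n_t}{2}.
\]
The crux is a matching \emph{lower} bound on $\sum g_i$: the naive estimate $g_i\ge \tfrac12$ (from integrality at $p=\tfrac12$) only recovers the $\sqrt n$-bound of Erd\H{o}s--\L uczak--Spencer. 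To reach the $n^{2/3}$-exponent I would exploit that $v_{i+1}$ is of \emph{minimum} degree in $G_i$: every retained vertex $u\in V(G_t)$ then satisfies $\deg_{G_i}(u)\ge d_i$ at every stage $i$, and so has at most $(1-p)(n_i-1)+g_i$ non-neighbours in $G_i\setminus\{u\}$. Aggregating these constraints over $u\in V(G_t)$ and $i<t$, and double-counting the non-edges of $G$ between $V(G_t)$ and the discarded set $\{v_1,\ldots,v_t\}$, should yield a second, lower bound on $\sum g_i$; balancing it against the edge-slack estimate forces $n_t$ to meet the claimed bound. The range $n^{-2/3}<p<1-n^{-1/7}$ enters to keep lower-order error terms negligible.

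\emph{Tightness.} For $p=r/(r+1)+cn^{-2/3}$ with $c\ge 1$ and $r\in\mathbb N$, I would take $n=(r+1)t$ and let $G=K_{r+1}(t)\cup K_s$, where $K_{r+1}(t)$ is the balanced complete multipartite graph on parts $P_1,\ldots,P_{r+1}$ of size $t$ (density exactly $r/(r+1)$) and $K_s$ is a clique planted inside $P_1$, with $s=\Theta(\sqrt c\,n^{2/3})$ chosen so that the $\binom s2$ extra edges bring the density up to exactly $p$. For any full subgraph $H\subseteq G$ on $m$ vertices, write $s_i=|V(H)\cap P_i|$ and $s_1'=|V(H)\cap K_s|$. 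Applying the fullness inequality $\deg_H(v)\ge p(m-1)$ to $v\in V(H)\cap P_i$ for $i\ne 1$, and to $v\in V(H)\cap(P_1\setminus K_s)$, forces $s_i\le (1-p)(m-1)+1$; for $v\in V(H)\cap K_s$ (whose intra-$P_1$ clique-degree is $s_1'-1$) it only forces $s_1-s_1'\le (1-p)(m-1)+1$. Summing over $i=1,\ldots,r+1$ and using $m=\sum_i s_i$, $s_1'\le s$, together with the identity $1-r(1-p)=\tfrac{1}{r+1}+rcn^{-2/3}$, one obtains $m\le (r+1)(s+r)=O(n^{2/3})$. Combined with the lower bound from the first part, this yields $f(G)=\Theta(n^{2/3})$.

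\emph{Main obstacle.} The technical heart of the argument is the lower bound on $\sum g_i$: integrality-only arguments recover just the Erd\H{o}s--\L uczak--Spencer $\sqrt n$ bound, so reaching the $n^{2/3}$-exponent requires genuinely new structural input, which I envisage as the double-counting of non-edges between $V(G_t)$ and the discarded vertex set, with the minimum-degree hypothesis providing the critical quantitative link. The tightness proof, by contrast, is a comparatively direct verification, the only subtlety being the case analysis ensuring that no mixed choice of vertices of $V(H)$ inside and outside the planted clique $K_s$ can defeat the $O(n^{2/3})$ bound.
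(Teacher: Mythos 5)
Your lower bound strategy has a gap that cannot be repaired within the greedy framework you describe. The paper's own remark at the end of Section~\ref{proofofgreedy} exhibits $n$-vertex graphs of density $\tfrac12 + o(1)$ (the $n$-th power of a Hamilton cycle with antipodal edges and a planted $K_{m,m}$, $m = \Theta(\sqrt n)$) on which repeated minimum-degree deletion can terminate at a full subgraph of order only $\Theta(\sqrt n)$. Consequently no analysis of $\sum_{i<t} g_i$, however clever the double-counting of non-edges between $V(G_t)$ and the discarded vertices, can force the terminal $n_t$ up to $\Omega(n^{2/3})$: your own edge-slack inequality $\sum_{i<t} g_i \le (1-p)\binom{n_t}{2}$ shows that on such a graph $\sum g_i = O((1-p)n)$, so a universal lower bound $\sum g_i = \Omega(n^{4/3})$ is simply false. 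The structural ingredient you are missing is the relative-fullness machinery of Theorems~\ref{q-full or (1-q)-full} and~\ref{1/r-full}, which replaces ``the greedy halts late'' by a dichotomy. One fixes $2^t \approx (1-p)^{-2/3}n^{1/3}$, writes $d_i = \lceil p(n-i)\rceil$ and $r_i = d_i \bmod 2^t$, and runs the greedy for only $\lceil n/2\rceil$ steps. At each of the $\Omega((1-p)n)$ indices $i$ with $r_i \le (1-p)2^t$ one asks whether $G_i$ has minimum degree exceeding $d_i - r_i$. If yes at some such $i$, then a relatively $2^{-t}$-full subgraph of $G_i$ supplied by Theorem~\ref{1/r-full} is already $p$-full in $G$ and has $\Theta((1-p)^{2/3}n^{2/3})$ vertices. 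If no at every such $i$, each such deletion removes a vertex of degree at most $d_i - r_i$, and summing the banked slack forces $\mathrm{disc}^+(G) = \Omega((1-p)^{7/3}n^{4/3})$, whereupon Theorem~\ref{greedy} --- applied, crucially, to a maximum-discrepancy subgraph rather than to $G$ itself --- produces a full subgraph of the claimed order. This dichotomy is exactly the ``genuinely new structural input'' you correctly sensed was needed but did not identify.

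Your tightness sketch, by contrast, is essentially sound and close in spirit to the paper's. The paper plants an equal-sized clique inside \emph{every} part of the balanced complete $(r+1)$-partite graph, which lets one pick in any purported full subgraph on $m > (r+1)k$ vertices a vertex in the largest $X_i$ lying outside its clique and derive $\delta m \le p$ in one step; your single-clique version requires the additional case split on whether $V(H)\cap(P_1\setminus K_s)$ is empty, but both routes deliver $m = O(n^{2/3})$, and combined with the lower bound both give $f(n,p)=\Theta(n^{2/3})$ for $p=\tfrac{r}{r+1}+cn^{-2/3}$.
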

We also show in Section~\ref{proofofgreedy} (after the
proof of Theorem~\ref{greedy}) that if $p \leq
n^{-\sfrac{2}{3}}$ then $\vert f(n,p) -
p^{\sfrac{1}{2}}n\vert \leq 1$.
%and for all $p < 1$, $f(n,p) = \Omega((1 - p)^{-\frac{1}{2}}n^{\frac{1}{2}})$.
%In the same section we also prove that for each $p < 1$, $f(n,p)=\Omega((1 - p)^{-\frac{1}{2}}n^{\frac{1}{2}})$. Henceforth we discuss $f(n,p)$ for $p$ in the range $n^{-\sfrac{2}{3}}<p<1-n^{-\sfrac{1}{5}}$, neither too small nor too large.
A case of particular interest is $p = \frac{1}{2}$, where Theorem \ref{main} together with the results of Erd\H{o}s, \L uczak and Spencer~\cite{ELS} gives
\[ \frac{1}{4\sqrt[3]{4}}n^{\sfrac{2}{3}} -1\leq f(n,\tfrac{1}{2}) \leq \left(2 + \frac{2}{\sqrt{3}}\right)n^{\sfrac{2}{3}}(\log n)^{\sfrac{1}{3}}.\]
A similar construction to that of Erd{\H o}s, {\L}uczak
and Spencer shows that $f(n,p)$ is within a logarithmic
factor of $n^{\sfrac{2}{3}}$ when $p \in
\{\frac{1}{2},\frac{2}{3},\frac{3}{4},\dots\}$. The order
of magnitude of $f(n,p)$ is not known in general, and we
pose the following problem:

\begin{problem}
For each fixed $p \in (0,1)$, determine the order of
magnitude of $f(n,p)$.
\end{problem}

It may also be interesting to determine the order of
magnitude of the minimum possible value of $f(G)$ when
$G$ is from a certain class of $n$-vertex graph of
density $p$, such as $K_r$-free graphs or graphs of
chromatic number at most $r$.

%-----------------------------------------------------------------------------------------------------
\subsection{Discrepancy and full subgraphs}
Full subgraphs are related to subgraphs of large positive
discrepancy. For a graph $G$ of density $p$ and an
$m$-vertex set $X \subseteq V(G)$, let $\delta(X) = e(X) -
p{m \choose 2}$, where $e(X)$ denotes the number of edges
of $G$ that lie in $X$. The {\em positive} and {\em
negative discrepancy} of $G$ are respectively defined by
\begin{eqnarray*}
\mbox{disc}^+(G) = \max_{X \subseteq V(G)} \delta(X) \quad \quad \mbox{disc}^-(G) = \max_{X \subseteq V(G)}(-\delta(X)).
\end{eqnarray*}
The {\em discrepancy} of $G$ is $\mbox{disc}(G) = \max\{\mbox{disc}^+(G),\mbox{disc}^-(G)\}$.
We prove the following simple bound via a greedy algorithm in Section \ref{proofofgreedy}, relating the positive discrepancy to the order of a largest full subgraph:
\begin{theorem}\label{greedy}
Let $G$ be a graph of density $p$ with $\mbox{disc}^+(G) = \alpha>0$. Then
\[ f(G) \geq (1 - p)^{-\sfrac{1}{2}} (2\alpha)^{\sfrac{1}{2}}.\]
\end{theorem}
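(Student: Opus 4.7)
My plan is to apply a one-step greedy/extremal argument: take a set $X \subseteq V(G)$ achieving the positive discrepancy maximum, and show that it must already be full.

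More precisely, let $X \subseteq V(G)$ be a set of size $m$ with $\delta(X) = \alpha$. Suppose for contradiction that $X$ is not full, i.e.\ there exists $v \in X$ with $d_X(v) < p(m-1)$. Setting $X' = X \setminus \{v\}$, a short computation using $p\binom{m}{2} - p\binom{m-1}{2} = p(m-1)$ gives
\[
\delta(X') - \delta(X) = -d_X(v) + \bigl(p\tbinom{m}{2} - p\tbinom{m-1}{2}\bigr) = p(m-1) - d_X(v) > 0,
\]
contradicting the maximality of $\delta(X)$. Hence $X$ itself is a full subgraph, so $f(G) \geq m$.

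To finish, I need a lower bound on $m = |X|$ in terms of $\alpha$. This is immediate from the trivial upper bound on $\delta$: since $e(X) \leq \binom{m}{2}$,
\[
\alpha = \delta(X) = e(X) - p\tbinom{m}{2} \leq (1-p)\tbinom{m}{2} \leq \tfrac{1}{2}(1-p)m^2.
\]
Rearranging yields $m \geq (1-p)^{-1/2}(2\alpha)^{1/2}$, which is the desired bound.

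There is no real obstacle here: the argument is a single maximality trick plus one inequality. The only point worth being careful about is the arithmetic in the discrepancy increment after deleting a low-degree vertex, and the (essentially trivial) passage from $\binom{m}{2} \geq \alpha/(1-p)$ to the stated square-root bound on $m$.
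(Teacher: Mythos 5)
Your proof is correct, and it is precisely the alternate route the paper itself flags in a remark immediately after its own proof: instead of a multi-step greedy deletion, you invoke the observation (the paper's Lemma~\ref{obs: max-disc implies full/co-full}) that any set $X$ attaining $\mathrm{disc}^+(G)$ is automatically full, and then bound $|X|$ from the trivial inequality $e(X) \le \binom{m}{2}$, giving $\alpha \le (1-p)\binom{m}{2} \le \tfrac{1}{2}(1-p)m^2$. The paper's primary proof instead starts from a set $H$ with $e(H) = p\binom{m}{2}+\alpha$ and repeatedly deletes a vertex of degree at most $\lceil p(\cdot)\rceil - 1$, tracking the edge count to conclude that termination happens while $(1-p)\binom{m-i}{2}\ge\alpha$. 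Both arguments hinge on the same elementary fact --- removing a below-threshold vertex cannot decrease discrepancy --- but your one-step version is a bit cleaner for this particular bound, while the paper's greedy formulation is what it later adapts (in the remarks after Theorem~\ref{greedy} and in the proof of Theorem~\ref{main}) to run directly on $G$ or on a carefully chosen subgraph, which is useful for squeezing out sharper constants and for the $p\le n^{-2/3}$ regime.
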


Theorem~\ref{greedy} is best possible, since the graph $G$ consisting of a clique with ${m \choose 2}$ edges
and $n-m$ isolated vertices has $f(G) = m$ and
\[ \mathrm{disc}^+(G)= \binom{m}{2}\left(1-\tfrac{\binom{m}{2}}{\binom{n}{2}}\right),\]
whereas the bound given by Theorem~\ref{greedy} is $f(G)
\geq \left\lceil \sqrt{m(m - 1)}\right\rceil = m$. On the other
hand, if $G$ is any $n$-vertex graph obtained by adding
or removing $o(n^{\frac{4}{3}})$ edges in a complete
multipartite graph with a bounded number of parts of
equal size, then $\mbox{disc}^+(G) = o(n^{\frac{4}{3}})$
and the lower bound in Theorem \ref{greedy} is superseded
by Theorem \ref{main}.

That there should be a relation between full subgraphs
(which have unexpectedly high minimum degree) and
subgraphs with large positive discrepancy (which have
unexpectedly many edges) is not surprising. Indeed, an easy observation is that any subgraph maximising the positive
discrepancy must be a full subgraph (see
Lemma~\ref{obs: max-disc implies full/co-full}).
%whether a deeper relation between the two notions exists is an interesting question.

%However, it is possible for $\mbox{disc}^+(G)$ to be linear in $n$, for example for complete
%multipartite graphs with parts of equal sizes, in which case Theorem \ref{main} gives much better lower bounds on $f(G)$.

%-----------------------------------------------------------------------------------------------------
\subsection{Random and pseudo-random graphs}
In a random or pseudo-random setting, we are able to improve our bounds on the size of a largest full subgraph by drawing on previous work on discrepancy and jumbledness. Jumbledness was introduced in a seminal paper of Thomason~\cite{Thomason87} as a measure of the `pseudo-randomness' of a graph.
\begin{definition}\label{def: jumbled}
A graph $G$ is $(p, j)$-jumbled if for every $X\subseteq
V(G)$, $\vert \delta_p(X) \vert \leq j \vert X\vert$.
\end{definition}
We prove that graphs which are `well-jumbled' --- meaning that they are $(p,j)$-jumbled for some small $j$, and so look `random-like' --- have large full subgraphs.
\begin{theorem}\label{theorem: f(G) geq disc+/alpha}
Suppose $G$ is a $(p,j)$-jumbled graph of density $p$.
Then
\[f(G)\geq \frac{\mathrm{disc}^+(G)}{j}.\]
%\qquad \textrm{ and }\qquad g(G)\geq \frac{\mathrm{disc}(G)}{\alpha}.\]
\end{theorem}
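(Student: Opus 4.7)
The plan is to combine the observation that a positive-discrepancy-maximising set induces a full subgraph (the forthcoming Lemma~\ref{obs: max-disc implies full/co-full}) with the jumbledness hypothesis applied to that same set, essentially in one line.

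First, I would let $X \subseteq V(G)$ be a subset attaining the maximum in the definition of $\mathrm{disc}^+(G)$, so that
\[
\delta(X) = e(X) - p\binom{|X|}{2} = \mathrm{disc}^+(G).
\]
By Lemma~\ref{obs: max-disc implies full/co-full}, the induced subgraph $G[X]$ is a full subgraph of $G$, hence $f(G) \geq |X|$. (The reason, recalled briefly: if some $v \in X$ had $\deg_X(v) < p(|X|-1)$, then deleting $v$ would strictly increase $\delta$, contradicting maximality.)

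Next, I would invoke the $(p,j)$-jumbledness hypothesis on this very set $X$. By Definition~\ref{def: jumbled},
\[
\mathrm{disc}^+(G) = \delta(X) = |\delta_p(X)| \leq j\,|X|,
\]
so rearranging gives $|X| \geq \mathrm{disc}^+(G)/j$. Combining this with $f(G) \geq |X|$ yields the claimed bound.

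There is no real obstacle here: the argument is a two-line marriage of Lemma~\ref{obs: max-disc implies full/co-full} (to convert a discrepancy maximiser into a full subgraph) and the jumbledness upper bound on $|\delta_p(X)|$ (to convert that discrepancy into a lower bound on $|X|$). The only point requiring a small check is that the maximising set $X$ is nonempty, which is immediate from the assumption $\mathrm{disc}^+(G) > 0$ (when $\mathrm{disc}^+(G)=0$ the stated inequality is trivial).
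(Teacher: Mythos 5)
Your argument is correct and matches the paper's proof (Theorem~\ref{theorem: p-full subgraph order at least disc/jumb}) essentially line for line: pick a set $X$ attaining $\mathrm{disc}^+(G)$, use Lemma~\ref{obs: max-disc implies full/co-full} to see $G[X]$ is full so $f(G)\geq|X|$, and bound $\mathrm{disc}^+(G)=|\delta_p(X)|\leq j|X|$ by jumbledness. The paper states this in a slightly more general $p$-full form and then also deduces the $g_p$ bound, but the core reasoning is identical to yours.
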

This result, which for small values of $j$ improves on Theorems~\ref{main} and~\ref{greedy}, is proved in Section~\ref{section: jumbledness}.

For random graphs, Erd\H{o}s, \L uczak and Spencer~\cite{ELS} showed that for $p=\frac{1}{2}$, $f(G_{n,p}) \geq \beta_1 n - o(n)$ asymptotically almost surely, where $\beta_1 \approx 0.227$. Using Theorem~\ref{theorem: f(G) geq disc+/alpha}, we can extend this linear lower bound to arbitrary, fixed $p\in(0,1)$. Erd\H{o}s and Spencer~\cite{ES} proved that for $p=\frac{1}{2}$ we have asymptotically almost surely
\begin{equation}\label{discrandom}
\mbox{disc}^+(G_{n,p}) = \Theta\bigl(p^{\sfrac{1}{2}}(1 - p)^{\sfrac{1}{2}} n^{\sfrac{3}{2}}\bigr),
\end{equation}
and that the same bound holds for
$\mbox{disc}^-(G_{n,p})$.  By extending their arguments,
it is easily shown that (\ref{discrandom}) holds for
arbitrary $p\in(0,1)$. Further it is well-known that $G =
G_{n,p}$ asymptotically almost surely has $\vert \delta_p(X)\vert
= O(\sqrt{p(1 - p)n}\vert X\vert )$ for all $X \subseteq V(G_{n,p})$
(see for example~\cite{KS}), so that $G_{n,p}$ is $(p,j)$-jumbled for some $j=O(\sqrt{p(1-p)n})$.
Combining this with (\ref{discrandom}) and
Theorem~\ref{theorem: f(G) geq disc+/alpha}, we obtain that for fixed
$p\in(0,1)$ asymptotically almost surely,
\begin{equation}\label{frandom}
f(G_{n,p}) =  \Omega(n).
\end{equation}
In the other direction, results of Riordan and Selby~\cite{RS} imply that for all fixed $p\in(0,1)$, $f(G_{n,p}) \leq \beta_2 n + o(n)$ asymptotically almost surely, where $\beta_2 \approx 0.851\dots$. We believe that $f(G_{n,p})$ is concentrated around $\beta n + o(n)$ for some function $\beta=\beta_p$,
and pose the following problem.
\begin{problem}
For each fixed $p \in (0,1)$, prove the existence and determine
the value of a real number $\beta=\beta_p$ such that for
all $\delta > 0$, $\mathbb P(\vert f(G_{n,p}) - \beta_p n\vert  >
\delta  n) \rightarrow 0$ as $n \rightarrow
\infty$.
\end{problem}

%-----------------------------------------------------------------------------------------------------
\subsection{Full and co-full subgraphs}
We also a consider a variant of our problem with a
Ramsey-theoretic flavour. A subgraph $H$ of a graph $G$
is {\em co-full} if $V(H)$ induces a full subgraph of
$G^c$, the complement of $G$. Equivalently, an induced
$m$-vertex subgraph $H$ of a graph $G$ with density $p$
is co-full if it has maximum degree at most $p(m - 1)$.
Let $g(G)$ be the largest integer $m$ such that $G$ has a
full subgraph with at least $m$ vertices or a co-full
subgraph with at least $m$ vertices. In other words,
$g(G) = \max\{f(G),f(G^c)\}$. Setting $g(n) = \min\{g(G)
: \vert V(G)\vert = n\}$, we prove the following theorem:

%By using Theorem \ref{eig} we obtain the following general result:
\begin{theorem}\label{theorem: g(n)=Omega(n/log n)}
There exist constants $c_1,c_2 > 0$ such that
\[ c_1\frac{n}{\log n} \leq g(n) \leq c_2 \frac{n\log\log n}{\log n}.\]
\end{theorem}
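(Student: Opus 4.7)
The theorem consists of a lower and an upper bound, which I would handle separately.

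For the lower bound $g(n)\geq c_1 n/\log n$, the strategy is to leverage Theorem~\ref{theorem: f(G) geq disc+/alpha} via a dichotomy based on how jumbled $G$ is. Given an $n$-vertex graph $G$ of density $p$, let $j$ be the smallest parameter for which $G$ is $(p,j)$-jumbled. If $j\leq C\sqrt{n}\log n$, invoke a universal lower bound on $\mathrm{disc}(G)$ of Erd\H{o}s--Spencer flavour, which forces $\max(\mathrm{disc}^+(G),\mathrm{disc}^+(G^c))=\Omega(n^{3/2})$ (recalling $\mathrm{disc}^-(G)=\mathrm{disc}^+(G^c)$). Since $G$ and $G^c$ have the same jumbledness parameter, Theorem~\ref{theorem: f(G) geq disc+/alpha} then immediately yields a full subgraph of either $G$ or $G^c$ on $\Omega(n^{3/2}/j)=\Omega(n/\log n)$ vertices. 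If on the other hand $j>C\sqrt{n}\log n$, then some $X\subseteq V(G)$ witnesses $|\delta_p(X)|>j|X|$, and the induced subgraph $G[X]$ has density appreciably farther from~$p$. Any full subgraph of $G[X]$ with respect to its own (larger) density is automatically full in $G$, so one recurses on $G[X]$, tracking the vertex count and the density drift. After $O(\log n)$ rounds, the density is pushed to an extreme where either Theorem~\ref{main} or an elementary direct argument produces a full subgraph of the required size.

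For the upper bound $g(n)\leq c_2 n\log\log n/\log n$, I would build an $n$-vertex graph witnessing the bound by a probabilistic construction: take $G=G(n,p_n)$ for an appropriately chosen density $p_n$ (tending to $0$ or $1$ slowly with $n$, since fixed $p$ gives $f(G_{n,p})=\Omega(n)$ by~\eqref{frandom}). A first-moment estimate gives
\[\E\bigl[\text{number of full $m$-subsets of }G\bigr] \leq \binom{n}{m}\Pr\bigl(\text{fixed $m$-subset is full}\bigr),\]
and the probability on the right can be controlled via sharp concentration for the minimum of the (correlated) induced-degree sequence of a random subset. Tuning $p_n$ to balance the analogous estimate for $G^c$, and union-bounding over $m$, yields $m=O(n\log\log n/\log n)$; the $\log\log n$ factor enters through the standard estimate for the minimum of $m$ approximately-binomial random variables with matched mean.

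The main obstacle is the bookkeeping in the lower-bound recursion: each pass to a non-jumbled subset $X$ shrinks the vertex count (potentially down to $|X|\asymp\sqrt{n}\log n$, well below the target $n/\log n$) and shifts the density. A potential-function argument tracking a quantity like $|V(H)|^{3/2}/j(H)$ (the expression driving Theorem~\ref{theorem: f(G) geq disc+/alpha}) should show that each recursive step either already delivers the required full subgraph or strictly improves the potential, so that the recursion terminates in $O(\log n)$ rounds with a usable induced subgraph. The upper bound is more routine but requires a delicate choice of $p_n$, since both $f(G)$ and $f(G^c)$ must be simultaneously controlled.
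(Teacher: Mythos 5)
Your proposal diverges from the paper's proof on both halves, and in each case there is a genuine gap.

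\textbf{Lower bound.} Your dichotomy (small jumbledness: combine a universal discrepancy lower bound with Theorem~\ref{theorem: f(G) geq disc+/alpha}; large jumbledness: recurse into a witnessing set) is a natural first idea, but as stated it does not close. The threshold $j\le C\sqrt{n}\log n$ is calibrated for density bounded away from $0$ and $1$: for a graph of density $p$, the Erd\H{o}s--Goldberg--Pach--Spencer-type bound only gives $\mathrm{disc}(G)=\Omega(\sqrt{p(1-p)}\,n^{3/2})$, which can be far below $n^{3/2}$, and $g(n)$ is a minimum over \emph{all} densities. More seriously, in the second branch the witnessing set $X$ with $|\delta_p(X)|>j|X|$ can be tiny (e.g.\ $|X|=\Theta(\sqrt{n}\log n)$ or smaller), so one recursion step can already drop you below the target $n/\log n$; your proposed potential $|V(H)|^{3/2}/j(H)$ does not, as far as I can see, prevent this, and you do not supply the step showing it terminates at a large enough subgraph. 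The paper's argument is precisely engineered to avoid this failure: it invokes Thomason's Proposition~\ref{theorem: Thomason}, which under the hypothesis that the jumbledness is \emph{not} witnessed on $\lceil n/2\rceil$-sets produces a \emph{large} (order $(1-O(1/\log n))n$) induced subgraph with reduced global jumbledness. Iterating this at most $\log n$ times forces the jumbledness to be witnessed on half-sized sets of a subgraph $G^\star$ on $\ge n/e$ vertices, and then Theorem~\ref{theorem: p-full subgraph order at least disc/jumb} gives $g_p(G^\star)\ge \mathrm{disc}_p(G^\star)/j_p(G^\star)\ge n_i/(4eM)=\Omega(n/\log n)$ for \emph{every} density $p$, since the ratio $\mathrm{disc}_{k,p}/j_{k,p}=k$ is density-free. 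A further subtlety your sketch glosses over: after passing to an induced subgraph, its density is no longer $p$, so one must work with the $p$-discrepancy and $p$-jumbledness relative to the \emph{original} $p$ throughout; this is why the paper sets up $f_p$, $g_p$, $j_p$ as separate notions.

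\textbf{Upper bound.} Here the approach is wrong, not just incomplete. A plain Erd\H{o}s--R\'enyi graph $G_{n,p_n}$ has a nearly homogeneous degree sequence, which is exactly the opposite of what is needed: for any choice of $p_n$, either $f(G_{n,p_n})$ or $f(G_{n,p_n}^c)$ is $\Omega(n)$ (for $p_n$ bounded away from $0,1$ this is (\ref{frandom}); for $p_n\to 0$ the complement is nearly complete and itself nearly full, and symmetrically for $p_n\to 1$). A first-moment computation would confirm this rather than beat $n/\log n$. What is actually needed is a graph with a \emph{heterogeneous} degree distribution, so that every large vertex subset is forced to contain both a vertex of unusually low degree and one of unusually high degree. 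The paper obtains this from the weighted random graph construction of Erd\H{o}s and Pach (Proposition~\ref{construction: Erdos-Pach}), which produces a graph where every $m$-set with $m\ge cn\log\log n/\log n$ has both minimum degree $<\tfrac12(m-1)-m^{1-v}$ and maximum degree $>\tfrac12(m-1)+m^{1-v}$. Since that graph does not have density exactly $\tfrac12$, the paper then splices $G_{2n}$ on $A$ and $G_{2n}^c$ on $B$ with a pair-matched random bipartite graph of density exactly $\tfrac12$ between them, and checks via Chernoff and a union bound that the bipartite part cannot rescue the degree-defective vertices. None of this is recoverable from a first-moment bound on $G(n,p_n)$, and the $\log\log n$ factor comes from the Erd\H{o}s--Pach construction itself, not from a ``minimum of $m$ binomials'' calculation.
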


Bounding $g(G)$ is related to, but distinct from, a
problem of Erd\H{o}s and Pach~\cite{ErdosPach83} on {\em
quasi-Ramsey numbers} (see
also~\cite{KangPachPatelRegts14}). Erd\H{o}s and
Pach~\cite{ErdosPach83} showed that for every $n$-vertex
graph $G$, in either $G$ or $G^c$ there exists a subgraph
with $m = \Omega(\frac{n}{\log n})$ vertices and minimum
degree at least $\frac{1}{2}(m - 1)$. In particular, when
$G$ has density $\frac{1}{2}$, this shows $g(G) =
\Omega(\frac{n}{\log n})$. Erd{\H os} and Pach in
addition gave an unusual weighted random graph
construction $G'$ to show their quasi-Ramsey bound was
sharp up to a $\log \log n$ factor. While $G'$ does not
have density $\frac{1}{2}$, a simple modification (see
Section \ref{g-upper}) gives a graph $G^{\star}$ of
density $\tfrac{1}{2}$ such that $g(G^{\star}) =
O(\frac{n\log\log n}{\log n})$, and this gives the upper
bound in Theorem \ref{theorem: g(n)=Omega(n/log n)}. This
leaves the following problem open:
\begin{problem}
Determine the order of magnitude of $g(n)$.
\end{problem}
By (\ref{frandom}) with $p = \frac{1}{2}$, note that
$g(G)$ is linear in $n$ for almost all $n$-vertex graphs
$G$. We may also define $g(n,p) = \min\{g(G) : \vert
V(G)\vert = n, |E(G)| = p{n \choose 2}\}$, and ask for
the order of magnitude of $g(n,p)$. Note Theorem
\ref{theorem: g(n)=Omega(n/log n)} gives $g(n,p) =
\Omega(\frac{n}{\log n})$ for all $p$.

%, but it is plausible that $g(n) = \Theta(\frac{n}{\log n})$.

%In light of quasi-Ramsey numbers, we may view our results on full subgraphs as being concerned with a certain kind of `quasi-Tur\'an problem'. More generally, given $\theta \in [0,1]$ and a graph $G$ on $n$ vertices of density $p$, we could ask for the maximum number $f_p(n, \theta)$ such that we can guarantee $G$ contains a subgraph on $m\geq f_p(n,\theta)$ vertices with minimum degree at least $\theta (m-1)$. The case $\theta=1$ corresponds to Tur\'an's theorem~\cite{Turan41}, while the case $\theta=p$ is the focus of the present paper.
%-----------------------------------------------------------------------------------------------------
\subsection{Relatively half-full subgraphs}

If $G$ is a graph, then a {\em relatively half-full}
subgraph of $G$ is a subgraph $H$ of $G$ such that
$d_H(v) \geq \frac{1}{2} d_G(v)$ for every $v \in V(H)$.
A key ingredient in the proof of Theorem \ref{main} is
the following theorem on relatively half-full subgraphs:

\begin{theorem} \label{relative}
Let $G$ be an $n$-vertex graph. Then $G$ contains a
relatively half-full subgraph with $\lfloor \frac{n}{2}
\rfloor$ or $\lfloor \frac{n}{2} \rfloor + 1$ vertices.
\end{theorem}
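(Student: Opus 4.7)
The plan is to run a swap-based local-search argument on an extremal bipartition. Set $m := \lfloor n/2 \rfloor$ and let $V = A \sqcup B$ be a partition with $|A| = m$ and $|B| = \lceil n/2 \rceil$ that maximises $e(A) + e(B)$. Call a vertex $v \in A$ \emph{bad} (writing $v \in \mathrm{Bad}_A$) if $d_A(v) < d_B(v)$, and symmetrically define $\mathrm{Bad}_B$; note that $v \in A$ is relatively half-full (i.e.\ $d_A(v) \geq d_G(v)/2$) precisely when $v \notin \mathrm{Bad}_A$, and similarly for $B$. If $\mathrm{Bad}_A = \emptyset$ then $A$ itself is a relatively half-full subgraph of size $\lfloor n/2 \rfloor$ and we are done; if $\mathrm{Bad}_B = \emptyset$ the same conclusion applies to $B$, whose size $\lceil n/2 \rceil$ equals $\lfloor n/2 \rfloor$ when $n$ is even and $\lfloor n/2 \rfloor + 1$ when $n$ is odd, so again we are done.

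The substantive case is when both $\mathrm{Bad}_A$ and $\mathrm{Bad}_B$ are non-empty. The key calculation is that swapping any $v \in A$ with any $u \in B$ changes the objective by
\[
\Delta(e(A)+e(B)) = (d_A(u) - d_B(u)) + (d_B(v) - d_A(v)) - 2[uv \in E].
\]
For $(v,u) \in \mathrm{Bad}_A \times \mathrm{Bad}_B$ each of the two parentheses is a positive integer and so is at least $1$. Maximality forces $\Delta \leq 0$, whence $uv \in E$ and both gaps equal exactly $1$ for every such pair. In other words, the extremal partition has the rigid property that $\mathrm{Bad}_A \cup \mathrm{Bad}_B$ spans a complete bipartite subgraph and every bad vertex is ``off by exactly one''.

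To finish, I will fix any $u \in \mathrm{Bad}_B$ and set $A' := A \cup \{u\}$, so that $|A'| = \lfloor n/2 \rfloor + 1$. The new vertex $u$ is good in $A'$ because $d_{A'}(u) = d_A(u) > d_B(u) = d_{V\setminus A'}(u)$; any $w \in A \setminus \mathrm{Bad}_A$ retains $d_{A'}(w) \geq d_A(w) \geq d_G(w)/2$; and for $w \in \mathrm{Bad}_A$ the structural claim supplies $uw \in E$ together with $d_B(w) = d_A(w) + 1$, so $d_{A'}(w) = d_A(w) + 1 \geq (2d_A(w)+1)/2 = d_G(w)/2$. Therefore $A'$ is relatively half-full of the required size $\lfloor n/2 \rfloor + 1$. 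The hard part of the plan is precisely this ``both sides bad'' case: without the very tight structural conclusion (complete bipartite with gap exactly one) forced by the extremal choice, the single-vertex inflation $A \mapsto A \cup \{u\}$ could easily fail to simultaneously repair every previously defective vertex of $A$.
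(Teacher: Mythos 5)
Your proof is correct and is essentially the $q=\tfrac{1}{2}$ specialization of the paper's proof of Theorem~\ref{q-full or (1-q)-full} (extremal bipartition plus swap argument forcing a complete bipartite structure with ``gap exactly one'' among the defective vertices), which is exactly how the paper derives Theorem~\ref{relative}. Working directly with $q=\tfrac{1}{2}$ lets you avoid the ceiling-function bookkeeping of the general case, but the underlying mechanism is identical.
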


Theorem \ref{relative} is best possible, in the sense
that the smallest non-empty relatively half-full subgraph of
$K_{n}$ has $\lfloor \frac{n}{2} \rfloor + 1$ vertices
and the smallest relatively half-full subgraph of
$K_{n,n}$ has $n + 1$ vertices when $n$ is odd. For
regular graphs, we obtain:

\begin{corollary}\label{regular}
Let $G$ be an $n$-vertex $d$-regular graph. Then
$G$ contains a full subgraph with $\lfloor \frac{n}{2} \rfloor$ or $\lfloor \frac{n}{2} \rfloor + 1$ vertices.
\end{corollary}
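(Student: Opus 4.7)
The plan is to deduce Corollary \ref{regular} from Theorem \ref{relative} by verifying that, when the host graph $G$ is $d$-regular, the relatively half-full subgraph produced by that theorem is automatically full in almost every case. First I would apply Theorem \ref{relative} to $G$ to obtain a relatively half-full subgraph $H$ on $m$ vertices, with $m \in \{\lfloor n/2 \rfloor, \lfloor n/2 \rfloor + 1\}$. Because $G$ is $d$-regular, relative half-fullness gives $d_H(v) \geq d/2$ for every $v \in V(H)$, and integrality of $d_H(v)$ upgrades this to $d_H(v) \geq \lceil d/2 \rceil$. The density of $G$ is $p = d/(n-1)$, so $H$ is full precisely when $d_H(v) \geq d(m-1)/(n-1)$, and the whole task reduces to checking the inequality $\lceil d/2 \rceil \geq d(m-1)/(n-1)$.

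A short case analysis on the parities of $n$ and $d$ settles most situations. When $m = \lfloor n/2 \rfloor$, we have $m - 1 \leq (n-1)/2$ irrespective of parity, so $d(m-1)/(n-1) \leq d/2 \leq d_H(v)$. When $m = \lfloor n/2 \rfloor + 1$ and $n$ is odd, $m - 1 = (n-1)/2$ and the same bound applies. When $m = \lfloor n/2 \rfloor + 1$ with $n$ even and $d$ odd, integrality is decisive: $\lceil d/2 \rceil = (d+1)/2$, and $(d+1)(n-1) \geq dn$ reduces to $n - 1 \geq d$, which holds since $G$ is a simple graph.

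The remaining case --- $n$ and $d$ both even with $m = n/2 + 1$ --- is the main obstacle: then $d(m-1)/(n-1) = dn/(2(n-1))$ strictly exceeds $d/2$, and a relatively half-full subgraph $H$ of size $n/2 + 1$ need not be full. Here the plan is to produce a full subgraph of size $n/2$ by deleting a single vertex from $H$. Let $B = \{u \in V(H) : d_H(u) = d/2\}$ be the set of tight vertices. If some $v \in V(H)$ has no neighbor in $B$, then $H \setminus \{v\}$ has $n/2$ vertices and still has minimum degree at least $d/2$ (every neighbor of $v$ had degree at least $d/2 + 1$ in $H$, and every non-neighbor is unchanged), so it is relatively half-full and, by the first paragraph, full. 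The delicate subcase is when $B$ dominates $V(H)$; I would attack it via a double count of the edges of $H$ incident to $B$ combined with the parity constraint that $\sum_{v \in V(H)} d_H(v)$ is even, aiming either to rule out this configuration or to locate a size-$n/2$ full subgraph by an alternative choice.
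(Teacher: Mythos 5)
Your handling of the cases where $n$ and $d$ are not both even agrees in substance with the paper's: both apply Theorem~\ref{relative}, use integrality to upgrade the degree bound to $\lceil d/2\rceil$, and verify $\lceil d/2\rceil \geq d(m-1)/(n-1)$ by a short parity check. The issue is the remaining case, $n$ and $d$ both even with $m=n/2+1$, which is exactly the one requiring a new idea, and which your proposal leaves open. You propose to delete a vertex $v$ of $H$ all of whose $H$-neighbours have degree at least $d/2+1$, but you acknowledge you cannot find such a $v$ when the tight set $B=\{u : d_H(u)=d/2\}$ dominates $V(H)$, and the suggested double-count is not carried out. In fact this route appears genuinely blocked rather than merely unfinished: nothing in the conclusion of Theorem~\ref{relative} prevents $H$ from being $(d/2)$-regular on $n/2+1$ vertices, in which case $B=V(H)$, deleting \emph{any} vertex leaves a vertex of degree $d/2-1$, and since $\lceil d(n/2-1)/(n-1)\rceil = d/2$ the resulting subgraph on $n/2$ vertices is not full. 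So no single-vertex deletion from such an $H$ can succeed, and one cannot repair the argument while treating Theorem~\ref{relative} as a black box.

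The paper closes this case by a different and cleaner route. Rather than post-processing the output of Theorem~\ref{relative}, it goes back into the proof of the underlying Theorem~\ref{q-full or (1-q)-full}: when $G$ is $d$-regular with $d$ even and $q=1/2$, condition (c) there (which requires $qd(x) < \lceil qd(x)\rceil$) cannot hold, so alternative (iii) is impossible and one of (i), (ii) must occur. With $n$ even, either alternative gives a relatively half-full subgraph on exactly $n/2$ vertices with minimum degree at least $d/2$, which your first-paragraph computation already shows is full. The moral is that the $``+1"$ slack in Theorem~\ref{relative} is harmless when $n$ or $d$ is odd but is precisely what breaks in the doubly-even case; the fix is a parity observation inside the proof showing that slack can be removed when $d$ is even, not a local repair of the subgraph you are handed.
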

(Note that of course $G$ itself is full, as it is regular.) When $d$ is very small relative to $n$, Alon~\cite{Alon97}
showed that any $d$-regular $n$-vertex graph contains a subgraph on $\lceil \frac{n}{2} \rceil$ vertices in which the minimum degree is at least $\frac{1}{2}d + c d^{\sfrac{1}{2}}$, exceeding the requirement for a full subgraph 
%and improving Corollary \ref{regular} 
by an additive factor of $c d^{\sfrac{1}{2}}$. However, as observed by Alon~\cite{Alon97}, such a result does not hold for large $d$, as for example complete graphs and complete bipartite graphs show.

%-----------------------------------------------------------------------------------------------------
\subsection{Relatively $q$-full subgraphs}\label{section: qfull}

Let $q \in [0,1]$. A subgraph $H$ of a graph $G$ is {\em
relatively $q$-full} if $d_H(v) \geq q d_G(v)$ for all $v
\in V(H)$. We prove:
\begin{theorem}\label{q-full or (1-q)-full}
	Let $G$ be a graph on $n$ vertices. Then for every $q \in[0,1]$, $G$ contains one of the following:
	\begin{enumerate}[{\rm (i)}] \setlength\itemsep{-0.3em}
		\item a relatively $q$-full subgraph on $\lceil qn\rceil$ vertices, or
		\item a relatively $(1-q)$-full subgraph on $\lfloor (1-q)n\rfloor$ vertices, or
		\item a relatively $q$-full subgraph on $\lceil qn\rceil +1$ vertices and a relatively $(1-q)$-full subgraph on $\lfloor (1-q)n\rfloor +1$ vertices.
	\end{enumerate}
%In addition if all degrees are even then (iii) cannot occur.
\end{theorem}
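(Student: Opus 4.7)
The plan is a direct extremal argument on partitions. Fix $k = \lceil qn \rceil$, so that $n - k = \lfloor (1-q)n \rfloor$, and over all partitions $V(G) = A \sqcup B$ with $|A| = k$ and $|B| = n-k$, select one maximising the weighted internal-edge count
\[
\Phi(A, B) := 2(1-q)\,e(A) + 2q\,e(B).
\]
The point of this particular weighting is the following clean swap identity: swapping $v \in A$ with $u \in B$ (forming $A' = (A \setminus \{v\}) \cup \{u\}$ and $B' = (B \setminus \{u\}) \cup \{v\}$) yields, after a short direct computation,
\[
\Phi(A', B') - \Phi(A, B) \;=\; 2\bigl[\phi(u) - \phi(v)\bigr] - 2\cdot[uv \in E(G)],
\]
where $\phi(x) := d_A(x) - q\,d_G(x)$ is the signed $q$-defect of $x$ with respect to $A$, and $[\,\cdot\,]$ denotes the Iverson bracket. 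Crucially, $v \in A$ fails the relatively $q$-full condition precisely when $\phi(v) < 0$, while $u \in B$ fails the relatively $(1-q)$-full condition precisely when $\phi(u) > 0$.

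Set $U_A := \{v \in A : \phi(v) < 0\}$ and $U_B := \{u \in B : \phi(u) > 0\}$. If $U_A = \emptyset$ then $A$ is a relatively $q$-full subgraph on $\lceil qn \rceil$ vertices, giving (i); if $U_B = \emptyset$ then $B$ is a relatively $(1-q)$-full subgraph on $\lfloor (1-q)n \rfloor$ vertices, giving (ii). Assume both sets are non-empty. Applying $\Phi(A', B') \leq \Phi(A, B)$ to any $(v, u) \in U_A \times U_B$ gives $\phi(u) - \phi(v) \leq [uv \in E]$; since $\phi(u) > 0 > \phi(v)$ the left side is strictly positive, forcing $uv \in E(G)$ for every pair in $U_A \times U_B$ and $\phi(u) + |\phi(v)| \leq 1$.

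A case analysis by integrality of $qd_G(v)$ and $qd_G(u)$ then pins down the structure. Writing $\phi(x) = d_A(x) - qd_G(x)$ with $d_A(x) \in \mathbb{Z}$, the smallest value of $|\phi(x)|$ among vertices with $\phi(x) \neq 0$ is $1$ when $qd_G(x) \in \mathbb{Z}$, and is $\{qd_G(x)\}$ (for $\phi<0$) or $1 - \{qd_G(x)\}$ (for $\phi > 0$) otherwise. Substituting into $\phi(u) + |\phi(v)| \leq 1$ rules out each of the three cases in which at least one of $qd_G(v), qd_G(u)$ is an integer (in every such case the minimum possible sum already exceeds $1$), and in the remaining case pins the defects to their smallest possible absolute values, yielding
\[
d_A(v) = \lceil qd_G(v)\rceil - 1 \ \text{for } v \in U_A, \qquad d_A(u) = \lceil qd_G(u)\rceil \ \text{for } u \in U_B.
\]

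Given this structure the conclusion follows. Pick any $u^* \in U_B$ and consider $A^+ := A \cup \{u^*\}$, of size $\lceil qn \rceil + 1$. Then $d_{A^+}(u^*) = d_A(u^*) = \lceil qd_G(u^*)\rceil \geq qd_G(u^*)$; each $w \in A \setminus U_A$ has $d_{A^+}(w) \geq d_A(w) \geq qd_G(w)$; and each $w \in U_A$ is adjacent to $u^*$ by the bipartite-completeness observation from the previous paragraph, so $d_{A^+}(w) = d_A(w) + 1 = \lceil qd_G(w)\rceil \geq qd_G(w)$. Thus $A^+$ is relatively $q$-full. Symmetrically, for any $v^* \in U_A$, $B^+ := B \cup \{v^*\}$ is a relatively $(1-q)$-full subgraph of order $\lfloor (1-q)n \rfloor + 1$, giving conclusion (iii). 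The main obstacle is the integrality case analysis in the third paragraph: the $-2\cdot[uv \in E]$ correction in the swap identity is exactly the right size to prevent the naive swap argument from closing directly, so one must exploit integrality of the degrees to force the extremal partition into the unique configuration in which the single edge $u^*w$ lifts every $w \in U_A$ across the $qd_G(w)$ threshold.
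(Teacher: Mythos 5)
Your proof is correct and follows essentially the same route as the paper's: you maximise the same weighted objective over the same family of bipartitions (your $\Phi$ is just twice the paper's $M=(1-q)e(X)+qe(Y)$), derive the identical swap inequality, conclude that failing vertices on opposite sides must be pairwise adjacent with degree values pinned exactly one step below/at the $q$-threshold, and then move a single vertex across to produce the two subgraphs of option (iii). The packaging via the signed defect $\phi(x)=d_A(x)-qd_G(x)$ and the explicit fractional-part case analysis is a tidy reformulation, but the underlying decomposition and conclusion match the paper's argument step for step.
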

Using Theorem~\ref{q-full or (1-q)-full}, we prove
Theorem~\ref{relative} and an extension to relatively
$\frac{1}{r}$-full subgraphs for $r\geq 3$:
\begin{theorem}\label{1/r-full}
	Let $G$ be a graph on $n$ vertices, and let $r\in \mathbb{N}$. Then $G$ contains a relatively $\frac{1}{r}$-full subgraph on $\lfloor \frac{n}{r}\rfloor$, $\lceil \frac{n}{r}\rceil$ or $\lceil \frac{n}{r} \rceil+1$ vertices.
\end{theorem}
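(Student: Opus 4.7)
The approach is induction on $r$ with Theorem~\ref{q-full or (1-q)-full} as the engine. The base case $r=1$ is trivial: $G$ itself is a relatively $1$-full subgraph on $n$ vertices. For the inductive step $r\geq 2$, apply Theorem~\ref{q-full or (1-q)-full} to $G$ with $q=1/r$ and split according to its three conclusions. Case~(i) yields a relatively $1/r$-full subgraph on $\lceil n/r\rceil$ vertices, and case~(iii) yields one on $\lceil n/r\rceil+1$ vertices; in both cases we are done immediately.

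The substantive case is~(ii), in which we obtain a relatively $(r-1)/r$-full subgraph $H$ of $G$ on $m:=\lfloor (r-1)n/r\rfloor$ vertices. Applying the induction hypothesis to $H$ with parameter $r-1$, we obtain a relatively $1/(r-1)$-full subgraph $H'$ of $H$ on $m'$ vertices, where
\[
m'\in\bigl\{\lfloor m/(r-1)\rfloor,\ \lceil m/(r-1)\rceil,\ \lceil m/(r-1)\rceil+1\bigr\}.
\]
The two fullness properties chain together: for every $v\in V(H')$,
\[
d_{H'}(v)\ \geq\ \tfrac{1}{r-1}\,d_H(v)\ \geq\ \tfrac{1}{r-1}\cdot\tfrac{r-1}{r}\,d_G(v)\ =\ \tfrac{1}{r}\,d_G(v),
\]
so $H'$ is a relatively $1/r$-full subgraph of $G$, as desired.

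The main obstacle is then verifying that $m'\in\{\lfloor n/r\rfloor,\lceil n/r\rceil,\lceil n/r\rceil+1\}$, which is a short floor/ceiling calculation handled by cases. Writing $n=qr+s$ with $0\leq s<r$: when $s\in\{0,1\}$ one checks that $m=(r-1)q$, so $m/(r-1)=q$ is an integer and $m'\in\{q,q+1\}$; when $s\geq 2$ one has $m=(r-1)q+s-1$ with fractional part $(s-1)/(r-1)\in(0,1)$, so $m'\in\{q,q+1,q+2\}$. On the other hand, the target set $\{\lfloor n/r\rfloor,\lceil n/r\rceil,\lceil n/r\rceil+1\}$ equals $\{q,q+1\}$ when $s=0$ and $\{q,q+1,q+2\}$ when $s\geq 1$, so the required containment holds in all three regimes, completing the induction.
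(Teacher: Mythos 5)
Your proof is correct and follows essentially the same route as the paper: induction on $r$ using Theorem~\ref{q-full or (1-q)-full} with $q=\tfrac{1}{r}$, dispatching cases (i) and (iii) directly and recursing inside case (ii). You supply more explicit detail than the paper on the degree-chaining step and on the floor/ceiling bookkeeping showing $m'\in\{\lfloor n/r\rfloor,\lceil n/r\rceil,\lceil n/r\rceil+1\}$, both of which check out.
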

Theorem~\ref{1/r-full} is best possible in the following
sense: if $r \geq 3$, consider the complete graph $K_n$
for some $n \geq r + 2$ with $n \equiv 2 \mod r$. A
smallest non-empty relatively $\frac{1}{r}$-full subgraph of $K_n$
has exactly $\lceil \frac{n-1}{r} \rceil + 1= \lceil
\frac{n}{r}\rceil +1$ vertices.

It is natural to ask whether Theorem~\ref{1/r-full} can be extended further to cover other $q$.
\begin{problem}\label{q-full question}
Determine whether there exists a constant $c$ such that
for every $q\in [0,\frac{1}{2}]$, every $n$-vertex graph
$G$ has a relatively $q$-full subgraph with at least
$\lfloor qn\rfloor$ vertices and at most $\lfloor qn
\rfloor +c$ vertices.
\end{problem}
For $q>\frac{1}{2}$, a cycle of length $n$ shows that
there exist $n$-vertex graphs with no non-empty
relatively $q$-full subgraphs on fewer than $n$ vertices.
We might try to circumvent this example by requiring a
weaker degree condition: define a subgraph $H$ of a graph
$G$ to be {\em weakly relatively $q$-full} if $d_H(v)
\geq \lfloor q d_G(v)\rfloor$ for all $v \in V(H)$.
However even for this notion of $q$-fullness a natural
generalisation of Theorem~\ref{1/r-full} fails for
rational $q>\frac{1}{2}$: consider the second power of a
cycle of length $n$. If $x$ is a vertex in a weakly
relatively $\frac{3}{4}$-full subgraph $H$, then all but
at most one of its neighbours must also belong to $H$.
Thus vertices not in $H$ must lie at distance at least
$5$ apart in the original cycle, and $H$ must contain at
least $\frac{4}{5}n$ vertices, rather than the
$\frac{3}{4}n +O(1)$ we might have hoped for. It would be
interesting to determine whether powers of paths or
cycles provide us with the worst-case scenario for
finding weakly relatively $q$-full subgraphs when
$q>\frac{1}{2}$.
\begin{problem}\label{weakly q-full question}
	Let $q\in (\frac{1}{2},1)$. Determine whether there
exist a constant $c_q < 1$ such that every $n$-vertex
graph $G$ has a weakly relatively $q$-full $m$-vertex
subgraph where $\lfloor qn \rfloor \leq m \leq c_q n$.
\end{problem}

%We believe that it should be possible to extend Theorem \ref{relative} to finding $q$-full subgraphs as follows:
%\begin{problem}\label{fullconj}
%Determine the smallest $c$ such that for any $n$-vertex graph $G$ and any $q \in [0,1]$, $G$ has a $q$-full subgraph with at least $\lfloor qn \rfloor$ vertices and
%at most $\lfloor qn \rfloor + c$ vertices.
%\end{problem}

%It is plausible that $c$ is bounded independently of $q$ and $n$, which is the case when $q = \frac{1}{2}$, by
%Theorem \ref{relative}.

%\begin{theorem}\label{weakly}
%If $G$ is an $n$-vertex graph and $r \in \mathbb N$, then $G$ has a weakly $\frac{1}{r}$-full subgraph with $\lfloor \frac{n}{r} \rfloor$ or $\lceil \frac{n}{r} \rfloor$ vertices.
%\end{theorem}

%It seems possible, in the spirit of Problem \ref{fullconj}, that every $n$-vertex graph has a weakly $q$-full subgraph with $\lfloor qn \rfloor$ or $\lceil qn\rceil$ vertices.
%Some further evidence is given by the following Ramsey-type theorem:
%
%\begin{theorem}\label{weakly2}
%Let $G$ be a graph on $n$ vertices. Then for every $q \in[0,1]$, $G$ contains a weakly $q$-full subgraph on $\lfloor qn\rfloor$ vertices or a weakly $(1-q)$-full subgraph on $\lceil (1 - q)n\rceil$ vertices.
%\end{theorem}

%-----------------------------------------------------------------------------------------------------
\subsection{Notation}

We use standard graph theoretic notation. In particular,
if $X,Y$ are sets of vertices of a graph $G=(V,E)$, then $e(X)$
denotes the number of edges in the subgraph $G[X]$ of $G$
induced by $X$, $e(G)$ is the number of edges in $G$, and
$e(X,Y)$ is the number of edges with one end in $X$ and the
other end in $Y$. Denote by $d_X(x)$ the number of
neighbours in $X$ of a vertex $x \in V(G)$. It is
convenient to let $\delta_p(X) = e(X) - p{\vert X\vert  \choose 2}$
when $X \subseteq V(G)$. The Erd\H{o}s-R\'{e}nyi random
graph with edge-probability $p$ on $n$ vertices is
denoted by $G_{n,p}$. If $(A_n)_{n \in \mathbb N}$ is a
sequence of events, then we say $A_n$ occurs
\emph{asymptotically almost surely} if $\lim_{n
\rightarrow \infty} \mathbb P(A_n) = 1$.

%-----------------------------------------------------------------------------------------------------------------------------------------------------------------------------------------------------------------------
\section{Relatively $q$-full subgraphs : proofs of Theorems \ref{relative} -- \ref{1/r-full}}\label{halffull}

\begin{proof} [Proof of Theorem~\ref{q-full or (1-q)-full}]
Let $G$ be a graph on $n$ vertices, and let $q\in [0,1]$
be fixed. Let $X\sqcup Y$ be a bipartition of $V(G)$ with
$\vert X\vert= \lceil qn\rceil$ and $\vert Y\vert =
\lfloor (1-q)n \rfloor$ maximising the value of
$(1-q)e(X)+qe(Y) := M$.

If $X$ is relatively $q$-full or $Y$ is relatively
$(1-q)$-full, then we are done. Otherwise there exist $x
\in X$ and $y \in Y$ with $d_X(x)\leq \lceil
qd_G(x)\rceil-1$ and $d_Y(y) \leq \lceil (1-q) d_G(y)
\rceil-1$. Let $X'=(X\setminus\{x\})\cup\{y\}$ and
$Y'=(Y\setminus\{y\})\cup\{x\}$, and let $\mathds{1}_{xy} =
1$ if $\{x,y\} \in E(G)$ and $\mathds{1}_{xy} = 0$
otherwise.  Write $d_G(x) = d(x)$ for $x \in V(G)$ and set $M'=
(1 - q)e(X') + qe(Y')$. Then
\begin{eqnarray*}
M' &=& M + (1-q)d_X(y)-qd_Y(y) +qd_Y(x)-(1-q)d_X(x)-\mathds{1}_{xy}\\
&=& M+ (1 - q)d(y) - d_Y(y) + qd(x) - d_X(x) -\mathds{1}_{xy} \\
&\geq& M + \Bigl((1-q)d(y)-\lceil(1-q)d(y)\rceil\Bigr) + \Bigl(qd(x) -\lceil qd(x)\rceil\Bigr) + 2 - \mathds{1}_{xy}.
\end{eqnarray*}
Since $X\sqcup Y$ maximised $(1-q)e(X)+qe(Y)$ over all
bipartitions with $\vert X\vert = \lceil qn \rceil$,
$\vert Y\vert=\lfloor (1-q)n\rfloor$, $M' \leq M$ and we
deduce from the inequality above that
\begin{center}
\begin{tabular}{lp{5in}}
(a) & $d_X(x)=\lceil qd(x)\rceil-1$ and $d_Y(y) = \lceil (1-q)  d(y) \rceil-1$. \\
(b) & $\{x,y\} \in E(G)$.  \\
(c) & $(1-q)d(y)<\lceil (1-q)d(y)\rceil$ and $qd(x)< \lceil qd(x)\rceil$.
\end{tabular}
\end{center}
 Now let $B_X$ denote the set of $x\in X$ with
$d_X(x)\leq \lceil qd(x)\rceil-1$ and $B_Y$ the set of
$y\in Y$ with $d_Y(y)\leq \lceil (1-q)d(y)\rceil-1$. By
our assumption, both sets are non-empty. By (b) above, $B_X\sqcup B_Y$ induces a complete
bipartite subgraph of $G$. Thus by (a) we have that for every $x\in B_X, y\in
B_Y$, $X\cup\{y\}$ is a relatively $q$-full subgraph on
$\lceil qn\rceil+1$ vertices and $Y\cup \{x\}$ is a
relatively $(1-q)$-full subgraph on $\lfloor
(1-q)n\rfloor+1$ vertices.
\end{proof}

\begin{proof}[Proof of Theorem~\ref{relative}]
Apply Theorem~\ref{q-full or (1-q)-full} with $q=\frac{1}{2}$.
\end{proof}

\begin{proof}[Proof of Corollary \ref{regular}] Suppose at least one of $n,d$ is odd. By Theorem \ref{relative}, every $d$-regular graph has an $m$-vertex subgraph $H$ with $m \in \{\lfloor \frac{1}{2}n \rfloor,\lfloor \frac{1}{2}n \rfloor + 1\}$
such that $d_H(v) \geq \lceil \frac{d}{2} \rceil$ for every $v \in V(H)$. Since for $n,d$ not both even
\[ \Big\lceil \frac{d}{2} \Big\rceil = \Big\lceil \frac{d}{n - 1} \Big\lfloor \frac{n}{2} \Big\rfloor \Big\rceil \geq \Bigl\lceil \frac{d}{n - 1}(m - 1)\Big\rceil , \]
the subgraph $H$ is a full subgraph.

In the case where both $n$ and $d$ are even, we need to
use a slightly stronger form of Theorem~\ref{relative}.
In the particular case where $G$ is $d$-regular with $d$
even and $q=\frac{1}{2}$, condition (c) in the proof of
Theorem~\ref{q-full or (1-q)-full} cannot be satisfied,
and in particular one of the alternatives (i) or (ii)
must hold in Theorem~\ref{q-full or (1-q)-full}. Thus $G$
must contain a subgraph $H$ on $\frac{n}{2}$ vertices
with minimum degree at least $\frac{d}{2}$, which is a
full subgraph.
\end{proof}

\begin{proof}[Proof of Theorem~\ref{1/r-full}]
We use Theorem~\ref{q-full or (1-q)-full} and induction on $r$. The base case $r=1$ is trivial, and Theorem~\ref{relative} deals with the case $r=2$. Now apply Theorem~\ref{q-full or (1-q)-full} with $q=\frac{1}{r}$: given a graph $G$ on $n$ vertices, this gives us a $\frac{1}{r}$-full subgraph on $\lceil \frac{n}{r}\rceil$ or $\lceil \frac{n}{r}\rceil +1$ vertices (alternatives (i) and (iii)) or an $\frac{r-1}{r}$-full subgraph $H$ on $\lfloor \frac{r-1}{r}n \rfloor$ vertices (alternative (ii)). In the latter case, we use our inductive hypothesis to find a $\frac{1}{r-1}$-full subgraph $H'$ of $H$ on $m$ vertices, for some $m:\ \lfloor \frac{n}{r}\rfloor\leq m\leq \lceil \frac{n}{r}\rceil +1$. The subgraph $H'$ is easily seen to be a $\frac{1}{r}$-full subgraph of $G$, and so we are done.
\end{proof}

\section{A greedy algorithm : proof of Theorem \ref{greedy}}\label{proofofgreedy}

A natural strategy for obtaining a full subgraph in a
graph $G$ of density $p$ on $n$ vertices is to repeatedly
remove vertices of relatively low degree. When there are
$i$ vertices left in the graph, such a greedy algorithm
finds a vertex of degree at most $\lceil p(i - 1)\rceil -
1$ and deletes that vertex, unless no such vertex exists,
in which case the $i$ vertices induce a full subgraph. If
$G$ has positive discrepancy $\alpha$, then we apply this
algorithm in a subgraph $H$ on $m$ vertices with $e(H)
\geq p{m \choose 2} + \alpha$ to obtain Theorem
\ref{greedy}.

\begin{proof}[Proof of Theorem \ref{greedy}] If $G$ has positive discrepancy $\alpha>0$, then its density $p$ is strictly less than $1$. Let $H$ be a
 subgraph of $G$ with $m$ vertices such that $e(H)= p{m \choose 2} + \alpha$. At stage $i$ we delete a vertex of degree at most $\lceil p(m - i)\rceil - 1$ in the remaining graph, or stop if no such vertex exists. The number of edges remaining after stage $i$ is at least
\[ p{m \choose 2} + \alpha - \sum_{j = 1}^i p(m - j) = p{m \choose 2} + \alpha - p{m \choose 2} + p{m - i \choose 2} =\alpha +p{m - i \choose 2}.\]
Therefore the greedy algorithm must terminate with a full
subgraph on $m - i$ vertices for some $i$ satisfying
$(1-p){m - i \choose 2} \geq \alpha$. We conclude $f(G)
\geq m - i \geq  (1 - p)^{-\sfrac{1}{2}}
(2\alpha)^{\sfrac{1}{2}}$.
\end{proof} An alternate proof may be obtained by
appealing to Lemma~\ref{obs: max-disc implies
full/co-full}, which states that a subgraph attaining the
maximum positive discrepancy must be full. The example of
a clique with $m$ vertices and $n - m$ isolated vertices
which shows that Theorem~\ref{greedy} is tight is the
same example which shows $f(n,p) = O(p^{\sfrac{1}{2}}n)$
for $p \leq n^{-\sfrac{2}{3}}$. We now prove that
$|f(n,p) - p^{\sfrac{1}{2}}n| \leq 1$ for $p \leq
n^{-\sfrac{2}{3}}$.

\begin{proof}[Proof that $|f(n,p) - p^{\sfrac{1}{2}}n| \leq 1$ for $p \leq n^{-\sfrac{2}{3}}$]
First we show $f(n,p) < p^{\sfrac{1}{2}}n + 1$ for all $p
: 0 < p \leq 1$. If $m$ is defined by ${m - 1 \choose 2}
< p{n \choose 2} \leq {m \choose 2}$, then the $n$-vertex
graph $G$ consisting of a subgraph of a clique of size
$m$ with $p{n \choose 2}$ edges, together with $n - m$
isolated vertices has
   $f(G) \leq m \leq p^{\sfrac{1}{2}}n + 1$. Next we show that every $n$-vertex graph $G$ of density $p$ has a full subgraph with at least $p^{\sfrac{1}{2}}n - 1$ vertices if $p \leq n^{-\sfrac{2}{3}}$. Remove all isolated vertices from $G$. The number of
isolated vertices is clearly at most $n - p^{\sfrac{1}{2}}n$, otherwise the remaining graph has
$p{n \choose 2}$ edges and fewer than $p^{\sfrac{1}{2}}n$ vertices, which is impossible since this is denser than a complete graph.
So we have a subgraph $H$ with at least $p^{\sfrac{1}{2}}n$ vertices and $p{n \choose 2}$ edges with no isolated vertices.
Clearly $H$ has a subgraph of minimum degree at least $1$ with at least $p^{\sfrac{1}{2}}n - 1$ vertices and at most
$p^{\sfrac{1}{2}}n + 1$ vertices, since the removal of a leaf in a spanning forest creates at most one new isolated vertex.
%addition of edges adds at most two vertices at a time.
This subgraph is full since
\[ \lceil p (p^{\sfrac{1}{2}}n+1 -1) \rceil = \lceil p^{\sfrac{3}{2}}n \rceil \leq 1\]
when $p \leq n^{-\sfrac{2}{3}}$, as required.
\end{proof}

{\bf Remarks.} The analysis of the greedy algorithm in the proof of Theorem~\ref{greedy} above is not optimal; in fact by considering the asymptotic behavior
of
\[ \phi = \liminf_{n \rightarrow \infty} \frac{1}{n} \sum_{i = 1}^n (p(n - i) + 1 - \lceil p(n-i) \rceil),\]
and performing our greedy algorithm directly on $G$ rather than on a maximum discrepancy subgraph it follows that
\[f(n,p) \geq\left\{ \begin{array}{ll}
\Bigl(\frac{n(q + 1)}{q(1 - p)}\Bigr)^{\sfrac{1}{2}} & \textrm{if $p$ is rational with denominator $q > 1$  }\\
\Bigl(\frac{n}{1 - p}\Bigr)^{\sfrac{1}{2}}  & \textrm{if
	%$p \rightarrow 0$
	$p<1-\varepsilon$ for some $\varepsilon>0$ and $p
\geq \frac{1}{n}$.}\end{array} \right.\] For instance if
say $p_n = \frac{1}{2} - o(1) < \frac{1}{2}$, then
$f_{p_n}(n) \geq (1 - o(1))\sqrt{2n}$, as shown by
Erd\H{o}s, \L uczak and Spencer~\cite{ELS}. These lower
bounds on $f(n,p)$ will be superseded by the better
bounds given in Theorem \ref{main}.

We note that there exist examples of $n$-vertex graphs
with density $p=\frac{1}{2}+o(1)$ where greedily removing
a vertex of minimal degree could yield a full subgraph of
order only $O(\sqrt{n})$. Consider the graph $G$ on
$V=\{0,1,....,4n+1\}$ obtained by taking the
$n^{\textrm{th}}$ power of the Hamiltonian cycle through
$0,1,2, \ldots , 4n+1$, adding edges between all
antipodal pairs $\{i, i+(2n+1)\}$ (with addition modulo
$4n+2$), and adding a complete bipartite graph $K_{m,m}$
with parts $\{0,1,...,m-1\}$ and
$\{2n+1,2n+2,...,2n+m\}$, where $m =
(3n)^{\sfrac{1}{2}}+O(1)$. It is an easy exercise to show
that by removing antipodal pairs of minimum degree
vertices a greedy algorithm could fail to find a full
subgraph until it has stripped the graph down to the
planted complete bipartite graph $K_{m,m}$.

%A final remark is that every $n$-vertex graph $G$ of density $p$ with $\frac{1}{n} \leq p \leq \frac{1}{2}$ has an $m$-vertex subgraph with minimum degree at least $p(m-1)+\Omega(\sqrt{n})$. This is a consequence of the fact that every $n$-vertex graph of density $p$ has positive discrepancy of order $(1 - p)n$, due to inequality (\ref{productdisc}), together with the bounds on $f(n,p)$ obtained above. The random graph $G_{n,p}$ has no $m$-vertex subgraph of larger minimum degree, by standard concentration inequalities.

%-----------------------------------------------------------------------------------------------------------------------------------------------------------------------------------------------------------------------
\section{Jumbledness: proof of Theorem~\ref{theorem: f(G) geq disc+/alpha}}\label{section: jumbledness}
As our arguments involve passing to subgraphs with different edge densities, it shall be useful to adapt our notion of full subgraphs, discrepancy and jumbledness as follows.

%\begin{definition}\label{def: p-full, p-discrepancy, p-jumbledness}
Let $G=(V,E)$ be a graph. An induced subgraph $H$ of $G$
on $m$ vertices is called \emph{$p$-full} if its minimum
degree is at least $p(m-1)$, and is called
\emph{$p$-co-full} if its maximum degree is at most
$p(m-1)$. Let $f_p(G)$ be the largest number of vertices
in a $p$-full subgraph of $G$, and let $g_p(G)$ be the
largest size of a $p$-full or $p$-co-full subgraph of
$G$. If $p$ happens to be the density of $G$, then in
fact $f_p(G)= f(G)$ and $g_p(G) = g(G)$. Determining the
smallest possible values of $f_p(G)$ and $g_p(G)$ given
the number of edges and number of vertices in $G$ can be
viewed as generalisations of the Tur\'{a}n and of the
Ramsey problems, respectively, which comprise the case $p
= 1$; see~\cite{KangPachPatelRegts14} and the references
therein.

The \emph{positive $p$-discrepancy} of $G$ is defined to
be $\mathrm{disc}_p^+(G)=\max_{X\subseteq
V}\delta_p(X)$. The \emph{negative $p$-discrepancy} of
$G$ is $\mathrm{disc}_p^-(G)=\max_{X\subseteq V}
(-\delta_p(X))$. The \emph{$p$-discrepancy} of $G$ is
$\mathrm{disc}_p(G)=\max\left(\mathrm{disc}_p^+(G),
\mathrm{disc}_p^-(G)\right)$. Finally, the
\emph{$p$-jumbledness} of $G$ is
\[j_p(G)=\max_{X\subseteq V} \frac{\vert \delta_p(X)\vert}{\vert X\vert}.\]
%\end{definition}
We begin with the following simple observation:
\begin{lemma}\label{obs: max-disc implies full/co-full}
Let $X$ be a subset of $V(G)$ such that
$\delta_p(X)=\mathrm{disc}_p^+(G)$.
Then $G[X]$ is $p$-full.
\end{lemma}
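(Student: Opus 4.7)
The plan is to argue by contradiction: assume that $G[X]$ is not $p$-full and exhibit a set $X'$ with strictly larger $p$-discrepancy than $X$, contradicting the assumed maximality of $\delta_p(X)$.

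Suppose there exists $v \in X$ with $d_X(v) < p(\vert X\vert - 1)$. The natural candidate is $X' = X \setminus \{v\}$. Computing the change in $p$-discrepancy, I would use
\[ e(X') = e(X) - d_X(v) \quad \text{and} \quad p\binom{\vert X\vert}{2} - p\binom{\vert X\vert - 1}{2} = p(\vert X\vert - 1),\]
which together give
\[ \delta_p(X') - \delta_p(X) = p(\vert X\vert - 1) - d_X(v).\]
Under the assumption $d_X(v) < p(\vert X\vert - 1)$, this difference is strictly positive, so $\delta_p(X') > \delta_p(X) = \mathrm{disc}_p^+(G)$, contradicting the definition of the positive $p$-discrepancy.

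Hence every $v \in X$ satisfies $d_X(v) \geq p(\vert X\vert - 1)$, which is exactly the statement that $G[X]$ is $p$-full. There is no real obstacle here: the entire argument is a one-line local-move computation, and the only thing to be careful about is the sign convention, namely verifying that removing a low-degree vertex increases rather than decreases $\delta_p$. (A symmetric remark, which I would briefly note, is that the analogous argument applied to a set $X$ attaining $\mathrm{disc}_p^-(G)$ shows that $G[X]$ is $p$-co-full, by considering $X \cup \{v\}$ for a high-degree external vertex $v$; this is not needed for the lemma as stated but motivates the parallel definitions introduced just before it.)
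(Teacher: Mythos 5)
Your proof is correct and is exactly the paper's argument, just written out in full: the paper's one-line proof says precisely that deleting a minimum-degree vertex from $X$ would strictly increase the $p$-discrepancy, which is the local-move computation you carried out. The sign check and the symmetric remark about $\mathrm{disc}_p^-$ and $p$-co-full subgraphs are both accurate.
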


\begin{proof}
Indeed, otherwise deleting a minimum degree vertex
from $X$ would strictly increase the $p$-discrepancy.
\end{proof}

We shall prove the following, slightly more general form
of Theorem~\ref{theorem: f(G) geq disc+/alpha}.
\begin{theorem}\label{theorem: p-full subgraph order at least disc/jumb}
Let $G$ be a graph and let $p \in[0,1]$. Then
\[f_p(G)\geq \frac{\mathrm{disc}^+_p(G)}{j_p(G)} \quad
\mbox{ and } \quad g_p(G)\geq \frac{\mathrm{disc}_p(G)}{j_p(G)}.\]
\end{theorem}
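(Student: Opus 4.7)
The plan is to show that the set $X \subseteq V(G)$ which realises the maximum positive $p$-discrepancy already supplies a $p$-full induced subgraph of the required size, with the size lower bound coming directly from the jumbledness hypothesis. The symmetric argument, applied to the set realising the maximum negative $p$-discrepancy, then handles $g_p(G)$.

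More precisely, for the first inequality, I would choose $X \subseteq V(G)$ with $\delta_p(X) = \mathrm{disc}^+_p(G)$. By Lemma~\ref{obs: max-disc implies full/co-full}, $G[X]$ is $p$-full, so $f_p(G) \geq \vert X \vert$. On the other hand, the definition of $p$-jumbledness gives
\[\mathrm{disc}^+_p(G) \;=\; \delta_p(X) \;=\; \vert \delta_p(X) \vert \;\leq\; j_p(G)\,\vert X\vert,\]
and dividing through yields $\vert X\vert \geq \mathrm{disc}^+_p(G)/j_p(G)$, as required.

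For the second inequality, I would first prove a dual of Lemma~\ref{obs: max-disc implies full/co-full}: any $X$ minimising $\delta_p$ (equivalently, maximising $-\delta_p$) induces a $p$-co-full subgraph. The argument mirrors the proof of the lemma: removing a vertex $v \in X$ of degree $d_{G[X]}(v)$ changes $\delta_p$ by $p(\vert X\vert-1) - d_{G[X]}(v)$; if $G[X]$ failed to be $p$-co-full, there would exist $v$ with $d_{G[X]}(v) > p(\vert X\vert-1)$, and deleting $v$ would strictly decrease $\delta_p$, contradicting minimality. Then, depending on whether $\mathrm{disc}_p(G) = \mathrm{disc}^+_p(G)$ or $\mathrm{disc}_p(G) = \mathrm{disc}^-_p(G)$, I apply the first inequality or the dual argument with $X$ minimising $\delta_p$; jumbledness gives $\mathrm{disc}^-_p(G) = -\delta_p(X) \leq j_p(G)\vert X\vert$, so the $p$-co-full subgraph $G[X]$ has order at least $\mathrm{disc}^-_p(G)/j_p(G)$. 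Taking the maximum of the two cases yields $g_p(G) \geq \mathrm{disc}_p(G)/j_p(G)$.

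There is no real obstacle here: the statement is essentially a one-line consequence of Lemma~\ref{obs: max-disc implies full/co-full} together with the definition of $j_p(G)$, with the only small additional ingredient being the (symmetric) observation that discrepancy-minimisers are $p$-co-full.
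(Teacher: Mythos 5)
Your proof is correct and takes essentially the same approach as the paper: pick a maximiser of $\delta_p$, apply Lemma~\ref{obs: max-disc implies full/co-full} to see it is $p$-full, and use the jumbledness bound $|\delta_p(X)| \leq j_p(G)|X|$ to bound $|X|$ from below. The only difference is in the second inequality: you prove a dual form of the lemma directly (a $\delta_p$-minimiser is $p$-co-full), whereas the paper applies the first inequality to the complement graph $G^c$ via the identities $\mathrm{disc}_p^-(G)=\mathrm{disc}_{1-p}^+(G^c)$ and $j_p(G)=j_{1-p}(G^c)$ --- these are two packagings of the same observation, and your version is arguably slightly more self-contained.
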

%Theorem~\ref{theorem: f(G) geq disc+/alpha} is immediate from applications of Theorem~\ref{theorem: p-full subgraph order at least disc/jumb} to a graph $G$ and its complement $\bar{G}$, noting that $\mathrm{disc}_p^-(G)=\mathrm{disc}_{1-p}^+(\bar{G})$ and $j_p(G)=j_{1-p}(\bar{G})$.
\begin{proof} %[Proof of Theorem~\ref{theorem: p-full subgraph order at least disc/jumb}]
Let $X$ be a subset of $V(G)$ such that
$\delta_p(X)=\mathrm{disc}_p^+(G)$.
Then $G[X]$ is $p$-full by Lemma \ref{obs: max-disc
implies full/co-full}, and in particular, $f_p(G) \geq
|X|$. By definition of $p$-jumbledness we have
\[\mathrm{disc}_p^+(G) = \Bigl\vert \delta_p(X) \Bigr\vert \leq j_p(G) \vert X\vert \leq j_p(G) f_p(G).\]
Applying the resulting lower bound for $f_p(G)$ to the
complement $G^c$ of $G$ (and noting that
$\mathrm{disc}_p^-(G)=\mathrm{disc}_{1-p}^+(G^c)$ and
$j_{p}(G)=j_{1-p}(G^c)$), we have
\[g_p(G) \geq \max\left( \frac{\mathrm{disc}_p^+(G)}{j_p(G)}, \frac{\mathrm{disc}_{1-p}^+(G^c)}{j_{1-p}(G^c)}\right)=
\frac{\mathrm{disc}_p(G)}{j_p(G)}.\]
\end{proof}

\section{Proof of Theorem~\ref{theorem: g(n)=Omega(n/log n)}}
%A key idea in our argument is to consider jumbledness and discrepancy when we are restricted to looking at induced $k$-vertex subgraphs.
%\begin{definition}\label{definition}
Let $G=(V,E)$ be a graph on $n$ vertices, and let $k:\
1\leq k\leq n$ be an integer. The \emph{$p$-jumbledness
of $G$ on $k$-sets} is defined to be
\[j_{k,p}(G)=\max_{X\subseteq V: \ \vert X\vert =k} \frac{\vert \delta_p(X) \vert}{\vert X\vert}.\]
Similarly, the \emph{positive $p$-discrepancy of $G$ on $k$-sets} is defined to be
\[\mathrm{disc}_{k,p}^+(G)=\max_{X\subseteq V: \ \vert X\vert =k} \delta_p(X),\]
with the \emph{negative $p$-discrepancy on $k$-sets} $\mathrm{disc}_{k,p}^-(G)$ and the \emph{$p$-discrepancy on $k$-sets} $\mathrm{disc}_{k,p}(G)$
defined mutatis mutandis.
%\end{definition}
Note that by definition we have $j_p(G)=\max \{j_{k,p}(G): \ 1\leq k\leq n\}$ and $\mathrm{disc}_p(G)=\max\{\mathrm{disc}_{k,p}(G): \ 1\leq k\leq n\}$. In addition for each $k$: $1\leq k\leq n$ we have $\mathrm{disc}_{k,p}(G) = k\cdot  j_{k,p}(G)$.

In general, it is not true that the jumbledness on
$k$-sets is of the same order as the global jumbledness
of $G$. Indeed consider an Erd{\H o}s--R\'enyi random
graph $G$ with edge probability $p=\frac{1}{2}$ within
which we plant a clique and a disjoint independent set,
each of order $m=n^{\frac{3}{4}}$. It is straightforward
to show with $k = \lceil \frac{n}{2}\rceil$ that asymptotically almost
surely
$j_{k,\sfrac{1}{2}}(G) = \Theta(n^{\sfrac{1}{2}})$ whilst
$j_{\sfrac{1}{2}}(G) = \Theta(m)$. To prove Theorem~\ref{theorem: g(n)=Omega(n/log
n)}, we use the following theorem of
Thomason~\cite{Thomason87} to show that every $n$-vertex
graph of density $p$ has a subgraph $G^{\star}$ with $n^{\star} = \Omega(n)$
vertices such that the $p$-jumbledness of $G^{\star}$ on $\lceil
\frac{n^{\star}}{2} \rceil$-sets and the global $p$-jumbledness of $G^{\star}$
differ by a factor of only $O(\log n)$.

\begin{proposition}[Thomason~\cite{Thomason87}]\label{theorem:
Thomason} Let $G$ be a graph of order $n$, let $\eta n$
be an integer between $2$ and $n-2$, and let $M>1$ and $p\in[0,1]$. Suppose that every set $X$ of $\eta n$
vertices of $G$ satisfies
\[\left\vert \delta_p(X) \right\vert \leq \eta n \alpha.\]
Then $G$ contains an induced subgraph $G'$ of order
\[\bigl\vert V(G')\bigr\vert \geq \left(1-\frac{880}{\eta (1-\eta)^2 M}\right)n\]
such that $G'$ is $(p, M\alpha)$-jumbled.
\end{proposition}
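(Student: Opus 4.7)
The plan is to prove this proposition by an iterative vertex-removal argument controlled by a sum-of-squares potential over $\eta n$-sized subsets. Starting from $G_0 := G$, at step $i$ we check whether the current $G_i$ is $(p, M\alpha)$-jumbled; if so we halt with $G' := G_i$, otherwise we fix a witness $X_i \subseteq V(G_i)$ with $\vert \delta_p(X_i)\vert > M\alpha \vert X_i\vert$, select a vertex $v_i \in V(G_i)$ via the averaging argument described below, and set $G_{i+1} := G_i \setminus \{v_i\}$. The goal is to show that the process halts after at most $t := \lfloor 880 n / (\eta(1-\eta)^2 M) \rfloor$ iterations.

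\textbf{Potential.} Consider
\[ \Phi(H) := \sum_{\substack{Y \subseteq V(H)\\ \vert Y\vert = \eta n}} \delta_p(Y)^2. \]
The hypothesis of the proposition gives the initial bound $\Phi(G) \leq \binom{n}{\eta n} (\eta n \alpha)^2$, and $\Phi$ is non-increasing under vertex deletion, with drop at step $i$ equal to $\Phi(G_i) - \Phi(G_{i+1}) = \sum_{Y \ni v_i,\, \vert Y\vert = \eta n,\, Y \subseteq V(G_i)} \delta_p(Y)^2$. Averaging over $v \in V(G_i)$, there is always some $v_i$ for which this drop is at least $\frac{\eta n}{n - i}\,\Phi(G_i)$. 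Hence the key is to lower-bound $\Phi(G_i)$ whenever $G_i$ is not $(p, M\alpha)$-jumbled.

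\textbf{Exploiting the witness.} The core calculation is as follows. Given $X_i$ with $\vert \delta_p(X_i)\vert > M\alpha \vert X_i\vert$, expand, for any $\eta n$-superset $Y \supseteq X_i$,
\[ \delta_p(Y) = \delta_p(X_i) + \bigl(e(X_i, Y\setminus X_i) - p\vert X_i\vert(\eta n - \vert X_i\vert)\bigr) + \delta_p(Y\setminus X_i). \]
A Cauchy--Schwarz / second-moment estimate applied when $Y \setminus X_i$ is a uniformly random $(\eta n - \vert X_i\vert)$-subset of $V(G_i)\setminus X_i$ shows that a constant fraction of such $Y$'s satisfy $\delta_p(Y)^2 \gtrsim (M\alpha\, \vert X_i\vert)^2$, so that
\[ \Phi(G_i) \gtrsim M^2 \alpha^2 \vert X_i\vert^2 \binom{\vert V(G_i)\vert - \vert X_i\vert}{\eta n - \vert X_i\vert}. \]
Combining this lower bound with the averaging step and tracking constants gives a per-step drop of order $\eta^3(1-\eta)^2 M \alpha^2 n \binom{n}{\eta n}$. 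Comparing with the initial upper bound on $\Phi(G)$, the number of iterations satisfies $t \leq 880 n/(\eta(1-\eta)^2 M)$ after bookkeeping; consequently $\vert V(G')\vert = n - t \geq (1 - 880/(\eta(1-\eta)^2 M))\, n$, as required.

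\textbf{Main obstacle.} The crux is the Cauchy--Schwarz / second-moment step, which must translate the local information that one small set $X_i$ has large relative discrepancy into a macroscopic lower bound on $\Phi(G_i)$. The precise constant $880$ and the factor $(1-\eta)^2$ in the denominator are artifacts of this estimate, reflecting how well $\delta_p(Y)$ concentrates around a value proportional to $\delta_p(X_i)$ when $Y$ is a random $\eta n$-superset of $X_i$; in particular the $(1-\eta)^2$ reflects shrinking room for extension as $\eta \to 1$. An additional subtlety is maintaining $\vert V(G_i)\vert \geq \eta n$ throughout so that $\eta n$-supersets exist, which follows automatically since $t \leq (1-\eta)n$ whenever the conclusion is non-vacuous, i.e.\ whenever $M > 880/(\eta(1-\eta)^3)$.
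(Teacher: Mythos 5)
First, a point of comparison: the paper does not prove this proposition at all --- it is imported verbatim from Thomason~\cite{Thomason87} --- so there is no internal proof to measure yours against, and your attempt must stand on its own. It does not. The fatal gap is the ``core calculation''. You need that a constant fraction of the $\eta n$-supersets $Y$ of the witness $X_i$ satisfy $\delta_p(Y)^2\gtrsim(M\alpha\vert X_i\vert)^2$, but for a uniformly random such superset, writing $k=\vert X_i\vert$, $s=\eta n-k$ and $R=\vert V(G_i)\vert-k$, one has
\[\mathbb{E}\bigl[\delta_p(Y)\bigr]=\delta_p(X_i)+\frac{s}{R}\Bigl(e\bigl(X_i,V(G_i)\setminus X_i\bigr)-pkR\Bigr)+\frac{s(s-1)}{R(R-1)}\,\delta_p\bigl(V(G_i)\setminus X_i\bigr).\]
The second and third terms are bipartite and outside discrepancies that are in no way controlled by $\delta_p(X_i)$, and they can cancel it: a dense spot whose vertices are correspondingly sparse towards the rest of the graph leaves $\mathbb{E}[\delta_p(Y)]\approx 0$, and a second-moment bound then shows $\delta_p(Y)$ concentrates near the \emph{wrong} value. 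Converting ``one set has large relative discrepancy'' into ``many $\eta n$-sets do'' requires dealing with exactly these cross terms, and that is where essentially all of the work in Thomason's argument lies; asserting it via ``Cauchy--Schwarz'' is not a proof. You also never address witnesses with $\vert X_i\vert>\eta n$, for which there are no $\eta n$-supersets inside $V(G_i)$ at all.

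Even granting the core claim, the bookkeeping does not close. Your stated lower bound is $\Phi(G_i)\gtrsim M^2\alpha^2 k^2\binom{R}{s}$, while the budget is $\Phi(G)\leq\binom{n}{\eta n}(\eta n\alpha)^2$ and every individual term of $\Phi$ is at most $(\eta n\alpha)^2$ by hypothesis. The ratio $\binom{R}{s}\big/\binom{n}{\eta n}$ is roughly $\eta^{k}$, i.e.\ exponentially small in the size of the witness, and nothing in your argument prevents $k$ from being of order $\sqrt{n}$ or larger (indeed $\vert\delta_p(X_i)\vert>M\alpha k$ only forces $k\gtrsim M\alpha$). Consequently the per-step drop you quote, of order $\eta^3(1-\eta)^2M\alpha^2 n\binom{n}{\eta n}$, does not follow from your own estimate --- it is off by the factor $\eta^{k}$ --- and the resulting bound on the number of iterations is vacuous. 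To salvage this line one would need either a reduction to witnesses of bounded size, or a potential that weights supersets of the witness appropriately; as written, the proposal does not establish the proposition.
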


\subsection{Proof of $g(n) = \Omega(\frac{n}{\log
n})$.}\label{g-lower}

Let $G = G_0$ be a graph on $n = n_0$ vertices with
density $p$. Let $\eta = \frac{1}{e}$ and $M=880\eta(1 -
\eta)^2(\log n + 1)$. For $i\geq 0$, if
\begin{align}\label{eq: jumbledness not achieved on large sets}
j_{\lceil {\sfrac{n_i}{2}}\rceil,p}(G_i) < \lfloor \tfrac{\eta}{M} j_p(G_i)\rfloor,
\end{align}
then apply Proposition~\ref{theorem: Thomason} to find an
induced subgraph $G_{i+1}$ of $G_i$ on $n_{i+1} \geq
(1-\frac{880}{\eta(1 - \eta)^2 M})n_i$ vertices  with
\begin{align}\label{eq: jumbledness in induced subgraph}
j_p(G_{i+1}) \leq M j_{\lceil {\sfrac{n_i}{2}}\rceil,p}(G_i).
\end{align}
Combining (\ref{eq: jumbledness not achieved on large sets}) and (\ref{eq: jumbledness in induced subgraph}) and iterating, we see that
\begin{align}\label{eq: decay of jumbledness}
j_p(G_{i+1}) \leq M j_{\lceil {\sfrac{n_i}{2}}\rceil,p}(G_i) < \lfloor \eta j_p(G_i)\rfloor \leq \lfloor \eta^{i+1} j_p(G_0) \rfloor \leq
\lfloor e^{-i-1}(n/2) \rfloor.
\end{align}
Since $j_p(G_{i + 1}) \geq 0$, we deduce from (\ref{eq:
decay of jumbledness}) that this procedure must terminate
for some $i <\log n$ with a graph $G^{\star} = G_i$
on $n_i$ vertices where
\begin{eqnarray*}
n_i &\geq \left(1-\frac{880}{\eta(1 - \eta)^2 M}\right)^{\log
n}n =\left(1 - \frac{1}{\log n + 1}\right)^{\log n} n  \geq  \frac{n}{e}
\end{eqnarray*}
and where $j_{\lceil
{\sfrac{n_i}{2}}\rceil,p}(G^{\star})\geq \frac{\eta}{M}
j(G^{\star})$. Applying Theorem~\ref{theorem: p-full
subgraph order at least disc/jumb} to $G^{\star}$ we have
\[g(G)\geq g_p(G^{\star})\geq \frac{\mathrm{disc}_p(G^{\star})}{j_p(G^{\star})}\geq \frac{\mathrm{disc}_{\lceil {\sfrac{n_i}{2}}\rceil,p}(G_i)}{eM j_{\lceil {\sfrac{n_i}{2}}\rceil,p}(G_i)+1}
\geq \frac{n_i}{4e M} \geq \frac{n}{4e^2 M}.\] Since $M =
880\eta(1 - \eta)^2 (\log n + 1) = \Theta(\log n)$, this
completes the proof. \qed

\subsection{Proof of $g(n) = O\left(\frac{n\log\log n}{\log
n}\right)$.}\label{g-upper}

We shall base our argument on a result of Erd{\H o}s and Pach~\cite{ErdosPach83}. Using an unusual weighted random graph construction, they proved the following theorem: 
\begin{proposition}[Erd{\H o}s--Pach~\cite{ErdosPach83}]\label{construction: Erdos-Pach}
For any $v>0$ there exists $c>0$ such that for all $n$ sufficiently large there exists an $n$-vertex graph $G$ in which every $m$-set of vertices with $m\geq cn\log\log n/\log n$ induces a subgraph with minimum degree strictly less than ${\sfrac{1}{2}}(m-1)- m^{1-v}$ and maximum degree strictly greater than ${\sfrac{1}{2}}(m-1)+m^{1-v}$.
\end{proposition}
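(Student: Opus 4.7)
The plan is to prove the proposition by a probabilistic construction in the spirit of Erd\H{o}s--Pach. A plain Erd\H{o}s--R\'enyi $G_{n,1/2}$ is insufficient: inside a fixed $m$-set its degrees concentrate around $(m-1)/2$ with standard deviation $O(\sqrt{m})$, exceeding the target margin $m^{1-v}$ only for $v \geq 1/2$. To handle arbitrary $v > 0$ we use a \emph{weighted} random graph. Assign each vertex $u$ an independent random label $\mathbf{X}_u$ from a carefully chosen distribution (typically a product of $\Theta(\log m)$ independent Bernoulli bits with graded biases), and include each edge $\{u,v\}$ independently with probability $p(\mathbf{X}_u, \mathbf{X}_v)$, where $p$ is a symmetric kernel satisfying $\mathbb{E}\,p = \tfrac{1}{2}$; a deterministic edge-adjustment at the end makes the density exactly $\tfrac{1}{2}$. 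The kernel is chosen so that the conditional expected degree $E(u,S) := \sum_{w \in S \setminus \{u\}} p(\mathbf{X}_u, \mathbf{X}_w)$ depends sharply on $\mathbf{X}_u$.

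The per-set analysis runs as follows. Conditional on the labels, the edges of $G[S]$ are independent; a Chernoff--Hoeffding bound gives $|d_S(u) - E(u,S)| = O(\sqrt{m\log n})$ simultaneously over every $u \in S$ and every $m$-set $S$ considered, with failure probability absorbed in the final union bound. If $S$ violates the first conclusion of the proposition --- i.e.\ $\min_{u \in S} d_S(u) \geq (m-1)/2 - m^{1-v}$ --- then every $u \in S$ satisfies $E(u,S) \geq (m-1)/2 - m^{1-v} - O(\sqrt{m\log n})$. The sharp dependence of $E(u,S)$ on $\mathbf{X}_u$ forces the labels of $S$ into a ``one-sided'' region $R_+(S)$, and an analogous argument for the second conclusion yields a region $R_-(S)$. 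The key quantitative claim, delivered by the multi-scale label structure, is that each of $\Pr[(\mathbf{X}_u)_{u\in S} \in R_\pm(S)]$ is at most $\exp(-vm\log m + O(m))$.

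Taking a union bound over the $\binom{n}{m} \leq \exp(m\log(en/m))$ choices of $S$ and over all $m \geq m_0 := cn\log\log n/\log n$, the total failure probability is
\[ \sum_{m \geq m_0} \exp\bigl(m\log(en/m) - v m\log m + O(m)\bigr). \]
At $m = m_0$, $\log(en/m) \sim \log\log n$ while $\log m \sim \log n$, so the per-set exponent $vm\log m \sim v c n\log\log n$ dominates the union-bound exponent $m\log(en/m) \sim c n(\log\log n)^2/\log n$ by a factor $\log n/\log\log n$. Choosing $c$ sufficiently large in terms of $v$ drives the total failure probability to $o(1)$, proving existence of the required graph.

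The main obstacle is designing the label distribution and the kernel $p$ so as to achieve the required per-set bound $\exp(-vm\log m)$. A single-coordinate label (e.g.\ $\mathbf{X}_u \in [0,1]$ uniform with kernel $p(x,y) = (x+y)/2$) only gives $\exp(-\Theta(m))$: the adversary has one degree of freedom per vertex and can push all labels of $S$ to one side of the distribution at cost at best $(2/3)^m$, which is nowhere near small enough to beat $\binom{n}{m} \sim \exp(m\log\log n)$ at the relevant scale of $m$. Stacking $\Theta(\log m)$ independent label coordinates with graded biases costs the adversary one extra factor per coordinate per vertex, producing the required $m^{-\Omega(m)}$ per-set penalty. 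Verifying this quantitative multi-scale construction --- with simultaneous control of the density, of the slope of $E(u,S)$ in $\mathbf{X}_u$, and of the probability of the one-sided regions --- is the technical heart of the Erd\H{o}s--Pach argument.
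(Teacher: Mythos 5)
First, note that the paper does not prove this proposition at all: it is an imported result, stated in the formulation of Kang, Pach, Patel and Regts, and the paper explicitly refers the reader to Section~3 of~\cite{KangPachPatelRegts14} for ``the delicate calculations involved''. So the comparison here is between your sketch and the known Erd\H{o}s--Pach argument, not anything proved in this paper.

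Your proposal has the right architecture --- an inhomogeneous random graph driven by random vertex labels, a per-$m$-set failure probability of order $m^{-\Omega(m)}$, and a union bound over $\binom{n}{m}\leq \exp(m\log(en/m))$ sets with the crossover at $m\asymp n\log\log n/\log n$ --- but it is an outline rather than a proof, and one step is quantitatively false as written. The false step: for a single pair $(S,u)$, Hoeffding gives failure probability $\exp(-2t^2/m)$, so to make $|d_S(u)-E(u,S)|\leq t$ hold simultaneously over all $m$-sets $S$ you need $t^2/m\gtrsim m\log(en/m)$, i.e.\ $t=\Omega\bigl(m\sqrt{\log\log n}\bigr)$ at the relevant scale of $m$ --- not $O(\sqrt{m\log n})$. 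Since $m\sqrt{\log\log n}\gg m^{1-v}$ (indeed it exceeds the whole degree range), the reduction from actual degrees to label-conditional expected degrees cannot be made uniformly over all $m$-sets with error below the target deviation; the edge randomness must instead be handled inside each per-set estimate, for instance by exploiting that the degrees of many vertices of $S$ into a common set are independent and multiplying their individual tail probabilities, exactly as this paper does in Section~\ref{g-upper}. The larger gap is that the entire content of the proposition sits in the one sentence you assert without proof: that a multi-scale label distribution and kernel can be designed so that the labels of a violating set are forced into a region of probability at most $\exp(-vm\log m+O(m))$. You correctly identify this as the technical heart and then do not supply it; a naive one-scale accounting gives only $\exp(-\Theta(m))$, and the bookkeeping relating the number of forceable scales to the allowed deviation $m^{1-v}$ is precisely where $v$ and $c$ must enter. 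A symptom that this has not been checked: you claim $c$ must be large for the union bound to close, yet with your asserted per-set bound the comparison at $m=m_0$ is $v\log m$ against $\log(en/m)$, i.e.\ $v\log n$ against $\log\log n$, which holds for every $c>0$; the constant $c$ has to earn its keep inside the per-set estimate, not in the union bound. As it stands, the proposal is a plausible roadmap with the central lemma left as an assertion.
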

We note that the formulation of Erd{\H o}s and Pach's result given above is due to Kang, Pach, Patel and Regts, see Theorem~4 in~\cite{KangPachPatelRegts14} and Section~3 in the same paper for an exposition of the delicate calculations involved. Erd{\H o}s and Pach's construction almost gives us what we want, namely a graph with no large full or co-full subgraph, with one caveat: it does not have density $\frac{1}{2}$. We circumvent this problem by taking disjoint copies of their construction and its complement and adding a carefully chosen random bipartite subgraph with density $\frac{1}{2}$ between them.

Fix $v=\frac{1}{7}$, and let $2n\in \mathbb{N}$ be sufficiently large to ensure the existence of a graph $G_{2n}$ with the properties guaranteed by Proposition~\ref{construction: Erdos-Pach}. Let $A$ and $B$ be disjoint sets of $2n$ vertices, each split into $n$ pairs. Given a pair in $A$ and a pair in $B$, place one of the two possible matchings between them selected uniformly at random, and do this independently for each of the $n^2$ pairs (pair from $A$, pair from $B$). This gives a random bipartite graph $H$ between $A$ and $B$ with density precisely $\sfrac{1}{2}$. Add a copy of $G_{2n}$ to $A$ and a copy of its complement $G_{2n}^c$ to $B$ to obtain a graph $G^{\star}$ on $4n$ vertices with density exactly $\sfrac{1}{2}$.

Let $m= \lceil cn\log \log n/\log n\rceil$,  and let $2\lambda=m^{1-v}=m^{\sfrac{6}{7}}$. Let $Y$ be a set of at least $4m$ vertices in $G^{\star}$. Without loss of generality, assume that $l=\vert Y\cap B\vert \leq \vert Y\cap A\vert$ and thus $\vert Y\cap A\vert \geq 2m$. Set $X'$ to be the collection of vertices in $Y\cap A$ that have degree at most ${\frac{1}{2}}\left(\vert Y\cap A\vert-1\right) -\lambda$ in $Y\cap A$. There are at least $\lambda$ such vertices, for otherwise $X= (Y\cap A)\setminus X'$ is a set of at least $2m-\lambda>m$ vertices inducing a subgraph of $G_{2n}$ with minimum degree at least ${\frac{1}{2}}\left(\vert Y\cap A\vert-1\right)-2\lambda >{\frac{1}{2}}\left(\vert X\vert-1\right) -\vert X\vert^{\sfrac{6}{7}}$,  a contradiction. For each pair from $A$ discard if necessary one of its two vertices from $X'$ to obtain a set $X''\subseteq X'$ of at least $\lambda/2$ vertices, each coming from a distinct pair. Note that by construction this means the degrees of the vertices from $X''$ into $Y\cap B$ are  independent random variables with mean $\frac{1}{2}l$. %identically distributed non-negative integer-valued random variables with mean $1/2$.

For $Y$ to induce a full subgraph of $G$, each vertex in $X'$ would need to have at least 	$\frac{1}{2}l +\lambda$ neighbours in $Y\cap B$. By standard concentration inequalities (e.g. the Chernoff bound), the probability that a given vertex in $X''$ has that many neighbours in $Y\cap B$ is at most $\exp\left(-\lambda^2/2l\right)$. By the independence noted above, the probability that all vertices in $X''$ have the right degree in $Y\cap B$ is thus at most
\[\exp\left(-\frac{\lambda^2}{2l}\vert X''\vert \right)\leq \exp\left(-\frac{\lambda^3}{4l} \right)\geq \exp\left(-\frac{1}{8}n^{\sfrac{11}{7}}(\log n)^{-3} \right)=o(2^{-4n}).\] 
It follows that asymptotically almost surely there is no pair $(X', Z)$ where $X'\subseteq A$ is a collection of at least $\lambda$ vertices and $Z\subseteq B$ are such that every vertex in $X'$ has degree at least $\frac{1}{2}\vert Z\vert+\lambda $ in $Z$. By symmetry, asymptotically almost surely no such pair exists either when $X'\subseteq B$ and $Z\subseteq A$, and in particular $G^{\star}$ contains no full subgraph on $4m$ vertices.	Still by symmetry, asymptotically almost surely the complement ${\left(G^{\star}\right)}^c$ also fails to contain a full subgraph on $4m$ vertices, and we deduce that $g(G^{\star})<4m$
%=O\left(4n \log \log (4n)/\log (4n)\right)$ 
as desired.

The construction of $G^{\star}$ shows $g(4n)=O\left(4n \log \log (4n)/\log (4n)\right)$, and it is straightforward to adapt it to show that more generally $g(n)=O\left(n \log \log n/\log n\right)$ when $n\not\cong 0 \mod 4$.
\qed

%-----------------------------------------------------------------------------------------------------------------------------------------------------------------------------------------------------------------------
\section{Proof of Theorem \ref{main}}\label{proofofmain}

\begin{proof}[Proof of $f(n,p) = O(n^{\sfrac{2}{3}})$ for $p = \frac{r}{r + 1} + cn^{-\sfrac{2}{3}}$ and $c \geq 1$ fixed]
Let $n\in \mathbb{N}$, and let $p=\frac{r}{r + 1} + cn^{-\sfrac{2}{3}}$ for some $c\geq 1$ be such that $p\binom{(r+1)n}{2}\in \mathbb{N}$. Set $\delta = cn^{-\sfrac{2}{3}}$.
Take a complete $(r + 1)$-partite graph with parts $S_1,S_2,\dots,S_{r + 1}$, and $n$ vertices in each part,
and for $i = 1,2,\dots,r + 1$, add a clique $T_i$ of size $k$ in $S_i$, such that
\[ {r + 1 \choose 2} n^2 + (r + 1){k-1 \choose 2}< p{(r + 1)n \choose 2} \leq {r + 1 \choose 2} n^2 + (r + 1){k \choose 2}.\]
A quick calculation shows $k\geq \sqrt{\delta r}n$ for $n$ sufficiently large. Delete edges from the $T_i$ in an equitable manner as necessary to obtain a graph $G_n$ on $(r+1)n$ vertices with precisely $p\binom{(r+1)n}{2}$ edges.
Suppose $H$ is a full subgraph of $G_n$ on $m>(r + 1)k$ vertices, induced by sets
$X_i \subseteq S_i$ where $X_i$ has size $s_i$ for $i = 1,2,\dots,r + 1$. Without loss of generality, we may assume $s_1=\max_i s_i >k$. For a vertex $v \in X_1$, let $d_j(v)$ denote the number of neighbours of $v$ in $X_j$.
Then for $x \in X_1 \setminus V(T_1)$, which is non-empty since $s_1 > k$,
\[ d_H(x) = \sum_{j = 2}^{r + 1} d_j(x) \leq m-s_1\leq \frac{r}{r+1}m.\]
On the other hand, since $H$ is full,
\[ d_H(x) \geq p(m - 1) = \frac{r}{r+1}m + \delta m - p.\]
It follows that $\delta m \leq p$ and thus $m\leq \delta^{-1}p\leq c^{-1}n^{\sfrac{2}{3}}$. However we had assumed that  
$m>(r+1)k> r^{\sfrac{3}{2}}\sqrt{c} n^{\sfrac{2}{3}}$. Taken together, our bounds for $m$ imply $c^{\sfrac{3}{2}}<r^{-\sfrac{3}{2}}$, and in particular $c<1$, a contradiction.
%and since $t > k$, this implies
%$\delta^{\sfrac{3}{2}}n \leq (\delta r)^{\sfrac{3}{2}}n < 1$. Since $\delta \geq n^{-\sfrac{2}{3}}$, this is a contradiction and shows
Thus
\[ f(G_n) \leq (r + 1)k = O\left(((r+1)n)^{\sfrac{2}{3}}\right).\]
This proves the second part of Theorem \ref{main}. \end{proof}

\begin{proof}[Proof of $f(n,p) \geq \frac{1}{4}(1 - p)^{\sfrac{2}{3}}n^{\sfrac{2}{3}}-1$ for $p = p_n: \   n^{-\sfrac{2}{3}}< p_n < 1- n^{-\sfrac{1}{7}}$] Let $G$ be an $n$-vertex graph of density $p$. We shall repeatedly delete vertices of minimum degree to obtain a sequence of subgraphs $G=G_1, G_2, G_3, \ldots$, with $G_i$ having $n-i+1$ vertices.
%We apply the greedy algorithm, Theorem \ref{greedy}, repeatedly deleting vertices of minimum degree.

Let $m = \lceil \frac{n}{2} \rceil$ and $d_i = \lceil p(n - i) \rceil$. Note that $d_i$ is the minimum degree required for $G_i$ to be full.
Let $t$ be a positive integer so that $(1-p)^{-\sfrac{2}{3}}n^{\sfrac{1}{3}} \leq 2^t < 2(1-p)^{-\sfrac{2}{3}}n^{\sfrac{1}{3}}$, and let $r_i$ be the remainder when $d_i$ is divided by $2^t$. %Set $\varepsilon=(1-p)$. Then for at least $\frac{\varepsilon}{2}m$ of the values $i \leq m$, $r_i \leq \varepsilon 2^t$.
For at least $\frac{(1-p)}{2}m$ of the values $i: \ 1\leq i\leq m$, we have $r_i \leq (1-p)2^t$.
At stage $i \leq m$ of the algorithm, we delete a vertex of minimum degree from $G_i$.
If for some $i \leq m$ such that $r_i \leq (1-p) 2^t$, all $n-i+1$ vertices in the graph $G_i$ have degree at least $d_i - r_i + 1$, then,
by Theorem~\ref{1/r-full} (or Theorem~\ref{relative} applied $t$ times), $G_i$ has a $\frac{1}{2^t}$-full subgraph $H$ on $N$ vertices, where
\[\Bigl \lfloor \frac{n-i+1}{2^t} \Bigr\rfloor \leq N \leq \Bigl\lceil \frac{n-i+1}{2^t}\Bigr \rceil+1 \leq \frac{n-i}{2^t} +2 -\frac{1}{2^t}.\]
Write $d_i =q2^t+r_i$. The minimum degree in $H$ is
\begin{align}
D&\geq \Bigl \lceil \frac{d_i-r_i+1}{2^t} \Bigr \rceil= q+1.\label{inequality: mindeg in relatively 2^{-t}-full subgraph}
\end{align}
For $H$ to be a full subgraph of $G$ we require $D\geq p(N-1)$. Now
\begin{align}
p(N-1)& \leq p\left(\frac{n-i}{2^t} +1 -\frac{1}{2^t}\right)\notag\\
&< \frac{d_i}{2^t}+p=q+\frac{r_i}{2^{t}}+p, \label{inequality: requirement for full}
\end{align}
which is at most $q+1$ since $r_i \leq (1-p)2^t$. As this is at most our lower bound on $D$, $H$ is a full subgraph of $G$. Our choice of $t$ ensures
\[ \vert V(H)\vert  \geq \Bigl\lfloor\frac{m}{2^t}\Bigr\rfloor\geq \frac{(1-p)^{\sfrac{2}{3}} n^{\sfrac{2}{3}}}{4}-1.\]

 %\ref{relative}, $G_i$ has a half-full subgraph, namely a subgraph $H_1$ with $N_1$ vertices where
%$\lfloor \frac{N}{2} \rfloor \leq N_1 \leq \lfloor \frac{N}{2} \rfloor + 1$ vertices and minimum degree at least $D_1 = \lceil \frac{d_i - r_i}{2}\rceil$. Repeating Theorem \ref{relative} a total of $t$ times, we arrive at a subgraph $H_t$ with $N_t$ vertices and minimum degree at least $D_t$ where $N_t \leq \lfloor \frac{1}{2}N_{t - 1} \rfloor + 1$ and $D_t \geq \lceil \frac{1}{2}D_{t - 1}\rceil$.
%It follows that
%\[ \frac{N}{2^t} - 2 \leq N_t \leq \frac{N}{2^t} + 2 - \frac{1}{2^{t - 1}} \quad \quad D_t \geq \Big\lceil \frac{d_i - r_i + 1}{2^t}\Big\rceil.\]
%To show that $H_t$ is a full subgraph of $G$, we require
%\[ D_t \geq p(N_t - 1).\]
%Write $d_i = Q 2^t + r_i$ and observe $D_t \geq Q + 1$. Therefore we must show
%\[ Q + 1 \geq p(\tfrac{n}{2^t} + 1 - \tfrac{1}{2^{t - 1}}) = Q + \frac{2r_i - 1 - p}{2^t} + p.\]
%Since $r_i < \varepsilon 2^{t - 1}$ and $p <1 - \varepsilon$, the above inequality is satisfied, which means $H_t$ is a full subgraph of $G$.
%The choice of $t$ ensures
%\[ |V(H_t)| \geq \tfrac{N}{2^t} - 2 \geq \tfrac{n}{2^{t + 1}} - 2 \geq %\tfrac{1}{4}n^{\sfrac{2}{3}} - 2.\]

On the other hand suppose that at every stage $i \leq m$
of the greedy algorithm where $r_i \leq (1-p) 2^{t}$, we
could remove a vertex of degree at most $\lceil p(n - i)
\rceil - r_i$ and that at every other stage $i\leq m$ we
could remove a vertex of degree at most $\lceil
p(n-i)\rceil -1$ (for otherwise we would have found a
full subgraph on at least $m$ vertices). Set $I=\{i\leq
m: \ r_i \leq (1-p) 2^{t}\}$. We know that $\vert I \vert
\geq \frac{(1-p)m}{2}$. What is more, $I$ can be divided
into intervals of consecutive indices $i$ of length at
most $(1-p)2^t\cdot(\frac{1}{p})$, and over each of these
intervals $r_i$ takes each of the values $1,2, \ldots
\lfloor(1-p)2^t\rfloor$ at least $\frac{1-p}{p}$ times.
Indeed, suppose $r_{i-1}=j+1$ and $r_i=j$ for some $j\geq
1$. Then there is a  $k$: $\frac{1-p}{p}\leq k \leq
\frac{1}{p}$ such that $r_{i'}=j$ for $i'\in \{i, i+1,
\ldots i+k-1\}$ and $r_{i+k}=j-1$.

By considering $\sum r_i$ on these intervals and using  $m=\lceil\frac{n}{2}\rceil$, $(1-p)^ {-\sfrac{2}{3}}n^{\frac{1}{3}}\leq 2^ t<2(1-p)^ {-\sfrac{2}{3}}n^{\frac{1}{3}}$, we get that:
\begin{eqnarray*}
\alpha := \mbox{disc}^+(G) &\geq& \left(p{n \choose 2} - \sum_{i =1}^{m} (\lceil p(n - i)\rceil - 1) + \sum_{i\in I} (r_i-1)\right) -p\binom{m}{2}\\
&\geq & \Bigl \lfloor \left(\frac{(1-p)m}{2} \right)\Big/ \left(\frac{(1-p)2^t}{p}\right) \Bigr\rfloor \cdot   \left(\sum_{j = 1}^{\lfloor (1-p) 2^{t}\rfloor } \frac{1-p}{p}j\right) \\
&\geq & \Bigl \lfloor
%\left(\frac{(1-p)n}{4}\right)\left(\frac{p}{2(1-p)^{\sfrac{2}{3}}n^{\sfrac{1}{3}}}\right)
\frac{p(1-p)^{\sfrac{2}{3}}n^{\sfrac{2}{3}}}{8}
\Bigr\rfloor  \left(\frac{1-p}{2p}\right)\left(\lfloor(1-p)^{\sfrac{1}{3}}n^{\sfrac{1}{3}} \rfloor \right)\left(\lfloor(1-p)^{\sfrac{1}{3}}n^{\sfrac{1}{3}} \rfloor  +1\right)\\
 &\geq &\frac{(1-p)^{\sfrac{7}{3}}}{32} n^{\sfrac{4}{3}}.
 \end{eqnarray*}
Then, by Theorem \ref{greedy}, we have
\[ f(G) \geq \sqrt{\frac{2\alpha}{1-p}} \rfloor  \geq \frac{(1-p)^{\sfrac{2}{3}}}{4} n^{\sfrac{2}{3}}.\]
This completes the proof of Theorem \ref{main}. \end{proof}

{\bf Remark.} We did not optimise the constants in the
proof of $f(n,p) =
\Omega({(1-p)}^{\sfrac{2}{3}}n^{\sfrac{2}{3}})$, since it
is unlikely that this argument gives an asymptotically
tight lower bound on $f(n,p)$. Also note that for
$p=1-o(n^{-\sfrac{1}{7}})$, this lower bound on $f(n,p)$
is superseded by that given in Theorem~\ref{greedy}.

%-----------------------------------------------------------------------------------------------------------------------------------------------------------------------------------------------------------------------
\section{Concluding remarks}

$\bullet$ We showed that $f(n,p) = \Omega(n^{\sfrac{2}{3}})$
and that this bound is tight up to constants for many values of
$p$. It remains an open problem to determine the order of
magnitude of $f(n,p)$ for each $p = p(n)$. Similarly, we
leave it as an open problem to determine the order of
magnitude of $g(n)$ and $g(n,p)$, having proved $g(n) = \Omega(n/\log
n)$ and $g(n)=O(n\log \log n/\log n)$.

$\bullet$ One motivation for studying relatively
half-full subgraphs, apart from their use in the proof of
Theorem \ref{main}, is a random process on graphs known
as majority bootstrap percolation: vertices of a graph
are infected at time zero with probability $p$, and at
any later time a vertex becomes infected if more than
half of its neighbours are infected. A key quantity of
interest in bootstrap percolation is the function
$\theta_p(G)$, which is the probability that the process
on the graph $G$ infects all the vertices in finite time.
%If $G$ is $d$-regular, then $\theta_p(G)$ is precisely the probability that at time zero there is no half-full subgraph all of whose vertices are uninfected.
The quantity $\theta_p(G)$ is precisely the probability
that at time zero there is no relatively half-full
subgraph of uninfected vertices. One may ask whether
there exist $p
> 0$ and $c < 1$ such that for every graph $G$, $\theta_p(G) \leq c$. In other words, is it the case that
if the infection probability is too small (but still
positive), then there is an absolute positive probability
that on any graph $G$ we fail to infect any vertex from
some relatively half-full subgraph (and hence that the
infection does not spread to all vertices of $G$)?
%It should be remarked that if we only require infection of {\em at least half} of the neighbours of a vertex to infect it, then for any $p > 0$, there are many sequences of graphs $G_n$, such as $n \times n$ grids~\cite{holroyd}, for which $\theta_p(G_n) \rightarrow 1$ as $n \rightarrow \infty$.
Recently, this was answered in the negative by Mitsche,
P{\'e}rez-Gim\'{e}nez and Pra{\l}at~\cite{MPP}, who given
any arbitrarily small $p>0$ constructed a sequence of
regular graphs $G_1,G_2,\dots $ such that $\lim_{n
\rightarrow \infty} \theta_p(G_n) = 1$. Another question, which remains open, is
whether the number of relatively half-full subgraphs of
an $n$-vertex graph grows at rate $\exp(\Theta(n))$ as $n
\rightarrow \infty$.

%\medskip

$\bullet$ {\bf Hypergraphs.} If $G$ is an $n$-vertex
$r$-uniform hypergraph of density $p$, then an $m$-vertex
subgraph $H$ of $G$ is {\em full} if for every $v \in
V(H)$, $d_H(v) \geq p{m - 1 \choose r - 1}$. Write $f(G)$ for the order of a largest full subgraph of $G$ and $f^ r(n,p)$ for the minimum of $f(G)$ over all $r$-uniform hypergraphs $G$ on $n$ vertices with density $p$. 
The problems
studied in this paper can be generalised to uniform
hypergraphs, and in particular, one may ask for the order of $f^r(n,p)$.
%the smallest number of vertices $f^r(n,p)$ in a full subgraph
%of an $n$-vertex $r$-uniform hypergraph of density $p$.
It is straightforward to imitate the proof of the upper bound in Theorem~\ref{main} to show that for certain values of $p \in (0,1)$
that $f^r(n,p) = O(n^{\sfrac{r}{r + 1}})$. We leave open
the problem of determining the tightness of this upper
bound, as well as of giving bounds on $f^r(n,p)$ when $p$
decays as $n$ grows:
 \begin{problem}
 Determine the order of magnitude of $f^r(n,p)$ for $p=p(n)$ and $r \geq 3$.
 \end{problem}
 The greedy algorithm from Section~\ref{proofofgreedy} generalises to $r$-uniform hypergraphs, in which it yields a
 full subgraph of order only $n^{\sfrac{1}{r}}$. It is an open question as to how the results on relatively $q$-full subgraphs in
 Section~\ref{section: qfull} may be extended to $r$-uniform hypergraphs.
 On the other hand, Proposition~\ref{theorem: Thomason} was
 generalised to hypergraphs by Haviland and
Thomason~\cite{HavilandThomason89}, and using their
result, the proof of the lower bound in Theorem~\ref{theorem:
g(n)=Omega(n/log n)} extends to the setting of
$r$-uniform hypergraphs exactly as before. In particular,
every $n$-vertex $r$-uniform hypergraph contains a full
or a co-full subgraph with $\Omega(n/\log n)$ vertices.

%\medskip

$\bullet$ {\bf Digraphs.} We could also ask about
directed graphs. Since every subgraph of a transitive
tournament has a vertex of in-degree zero and a vertex of
out-degree zero, it is more fruitful to ask about
extensions of Theorem~\ref{relative} than of
Theorem~\ref{main}. Let $d_H^+(v)$ denote the out-degree
of a vertex $v\in H$. A subgraph $H$ of a directed graph
$D$ is \emph{relatively $q$-out-full} if for every $v\in
H$ we have $d_H^+(v)\geq q d_D^+(v)$. Then the problem is
to determine the smallest function $h(n,q)$ such that
every digraph on $n$ vertices has a relatively
$q$-out-full subgraph with at most $h(n,q)$ vertices.
%If the outdegrees in the digraph are sufficiently large relative to $\log n$,
%then a random subset of roughly $qn + (n\log n)^{\sfrac{1}{2}}$ vertices induces a %$q$-out-full subgraph. Therefore the more precise question is how large
%$h(n,q) - qn$ can be.

%\medskip

$\bullet$ {\bf Weighted graphs.} A \emph{weighted graph} is a pair $W=(V,w)$, where $w: \ V^{(2)}\rightarrow [0,1]$ is a weighting of pairs of vertices from $V$. The \emph{density} $p$ of $W$ is then the average pair-weight under $w$, and the \emph{degree} $d_Y(x)$ of $x$ in a subset $Y\subseteq V$ is the sum over all $y \in Y$ of $w(\{x,y\})$. Our definition of full subgraphs carries over to the weighted graph setting in the natural way, and we can ask:
\begin{problem}\label{question: weighted rich}
Let $W$ be a weighted graph. Determine tight lower bounds for the order of a largest full subgraph of $W$.
\end{problem}
We may similarly ask about relatively $q$-full subgraphs for weighted graphs.  The proof of Theorem~\ref{q-full or (1-q)-full} extends naturally to this setting: running through the same argument as before, we obtain a new equality
\[M'=M + (1-q)d(Y)-d_Y(y) +qd(x)-d_X(x) -w(\{x,y\}) .\]
From the maximality of $M=(1-q)e(X)+qe(Y)$ over all weighted bipartitions, we deduce, in replacement of conditions (a)--(c), that 
\[ qd(x)- d_X(x) \leq w(\{x,y\})\qquad \textrm{ and } \qquad (1-q)d(y)-d_Y(y)\leq w(\{x,y\}). \] 
This implies as before that $X\cup\{y\}$ is relatively $q$-full on $\lceil qn \rceil +1$ vertices and $Y\cup\{x\}$ is relatively $(1-q)$-full on $\lfloor(1-q)n\rfloor+1$ vertices. Weighted analogues of Theorems~\ref{relative} and~\ref{1/r-full} follow as immediate corollaries. However, this does not quite allow us to prove a weighted version of Theorem~\ref{main}: in the weighted setting, degrees need not be integers and so the minimum degree lower bound (\ref{inequality: mindeg in relatively 2^{-t}-full subgraph})
becomes $D>q$, while the upper bound on the degree requirement for being  a full subgraph in (\ref{inequality: requirement for full}) remains $q+1-p2^{-t}$. Thus our argument in this case only yields a `weakly' full subgraph on $N$ vertices (i.e. with minimum degree greater than $p(N-1)-1$, rather than $p(N-1)$). Is this the best that can be done? We leave this as an open problem.

We note that the proofs of Theorems~\ref{greedy} and \ref{theorem: f(G) geq disc+/alpha} carry over to the weighted setting without any changes. To recover a weighted analogue of our result on full and co-full subgraphs, Theorem~\ref{theorem: g(n)=Omega(n/log n)}, a weighted version of Thomason's theorem (Proposition~\ref{theorem: Thomason}) would be needed. This problem too is left open.

%We may similarly ask about relatively $q$-full subgraphs
%for $q \in [0,\frac{1}{2}]$: what is the smallest $c$
%such that every $n$-vertex weighted graph has a
%relatively $q$-full subgraph with at most $\lfloor qn
%\rfloor + c$ vertices? We note here that the proof of
%Theorem~\ref{main} carries over to the weighted setting
%modulo a weighted version of Theorem~\ref{relative}. The
%proof of Theorem~\ref{relative} itself, however, does not
%transfer in an obvious way to the weighted setting.

%\medskip

$\bullet$ {\bf Computational complexity.} It appears likely that the following computational problems, which we have not investigated,
are of similar complexity to Max Cut:
\begin{enumerate}[(i)]
\setlength\itemsep{-0.3em}
\item find a largest full subgraph of $G$;
\item given an integer $k$, determine whether or not $G$ contains a full subgraph on $k$ vertices;
\item given an integer $k$, find a $k$-vertex subgraph with largest minimum degree.
\end{enumerate}
The problem of finding (an approximation to) the densest
subgraph of order $k$ has received a significant amount
of attention from the computer science community (see
e.g.
\cite{BhaskaracharikaChlamtacFeigeVijayaraghavan10,FeigePelegKortsarz01}),
including in some variants involving degree
constraints~\cite{AminiPelegPerennesSauSaurabh09}. From
an algorithmic perspective, we thus expect that problems
(i)--(iii) above will be hard: densest subgraph of order
$k$ is known to be NP-Hard. Further, examples due to
Sch\"affer and Yannakakis~\cite{SchafferYannakakis91} and
Monien and Tscheuschner~\cite{MonienTscheuschner10} for
weighted versions of the Max-Cut problem suggest that
local search (i.e. algorithms based on flipping vertices
between a $k$-set $X$ whose minimum degree we are trying
to maximise and its complement) could take exponential
time to converge to a local optimum for problem (iii)
(see also the work of Poljak~\cite{Poljak95}). We note
that the proofs of Theorems~\ref{main}, \ref{greedy},
\ref{relative} and \ref{1/r-full} yield polynomial time algorithms in each
case.

%-----------------------------------------------------------------------------------------------------------------------------------------------------------------------------------------------------------------------
\section*{Acknowledgements}
We are very grateful to the anonymous referees for pointing out important mistakes in our initial proof of Theorem~\ref{theorem: g(n)=Omega(n/log n)}, and for further making several useful suggestions which helped improve the presentation and correctness of the paper.

This research was conducted while the authors were attending the research semester on Graphs, Hypergraphs and Computing at the Institut Mittag-Leffler (Djursholm, Sweden), whose hospitality the authors gratefully acknowledge. We also thank L. Lov\'{a}sz who pointed out the work of S. Poljak, and A. Scott for a helpful discussion of counterexamples for partition problems.

%-----------------------------------------------------------------------------------------------------------------------------------------------------------------------------------------------------------------------

\end{document}